 \newcommand {\C} {{\mathbb C}}
 \newcommand {\Z} {{\mathbb Z}}
 \newcommand {\Q} {{\mathbb Q}}
\newcommand {\OO} {{\mathcal O}}
\newcommand{\dt} {{\bullet}}
\newcommand {\cP}{\mathcal{P}}
 \newtheorem{thm}[subsection]{Theorem}
 \newtheorem{cor}[subsection]{Corollary}
 \newtheorem{lemma}[subsection]{Lemma}
 \newtheorem{prop}[subsection]{Proposition}
\begin{document}

 \title[The prime to $p$ fundamental group]{Restrictions on the prime to $p$ fundamental group of a smooth projective variety}
 
  \author{Donu Arapura}
\thanks{Partially supported by NSF}
 \address{Department of Mathematics\\
   Purdue University\\
   West Lafayette, IN 47907\\
   U.S.A.}


 \begin{abstract}
   The goal of this paper is to obtain restrictions on the prime to p quotient of the étale fundamental group of a smooth projective variety in characteristic $p\ge 0$. The results are analogues some theorems in the study of Kähler groups. Our first main result is that such groups are indecomposable under coproduct. The second gives a classification of the pro-$\ell$ parts of one relator groups in this class.
 \end{abstract}

 \maketitle

Our goal is obtain some analogues, over fields of characteristic $p\ge
0$, of a few of the known  restrictions on the class of   K\"ahler groups; we
recall that a group is K\"ahler if it can be realized as the
fundamental group of a compact K\"ahler manifold.
In this paper, we  replace the usual fundamental group by the maximal
prime to $p$ quotient of the \'etale fundamental group.
We focus on this group, because it behaves most like it
 topological namesake.
Let $\cP(p)$ denote the class of profinite groups that arise
as  prime to $p$ fundamental groups of  smooth
projective varieties defined over an algebraically closed field of
characteristic $p$. 
Our first main result implies, among other things, the  indecomposibility of  groups in $\cP(p)$ under
coproduct. This is an analogue of Gromov's theorem  \cite{gromov} in
the K\"ahler setting. For the  second main result, we show that if
$G\in \cP(p)$ is the completion of a one relator group, then for almost every 
$\ell$, the pro-$\ell$ quotient $G_\ell$ of $G$
is isomorphic to the  pro-$\ell$ fundamental group of a smooth
projective curve. This  is inspired by the recent  classification of
one relator K\"ahler groups of Biswas, Mahan \cite{bm} and Kotschick
\cite{kotschick}, although the argument here  is completely different.  We deduce from the hard
Lefschetz theorem that $G_\ell$ is a Demushkin group for
almost all $\ell$, then
the result follows from the classification of such groups.

 I would like to thank   Jakob Stix for bringing  Demuskin's work to my attention.

\section{Preliminaries}

From the beginning, we fix an integer $p$ which is either a prime number or $0$. By a $p'$-group we will mean a finite group of
order prime to $p$ (or arbitrary when $p=0$), and by a pro-$p'$ group, we mean an inverse limit of
$p'$-groups.  The symbol $\ell$ will always stand for  a fixed or
variable prime different from $p$.
Given a pro-finite group $G$, let $G_{{p'}}$, respectively $G_\ell$,
denote the maximal pro-$p'$, respectively pro-$\ell$, quotient of
$G$. Given a discrete group $G$, we let
$$\hat G = \varprojlim_{G/N\text{ a $p'$-group}} G/N$$
denote the  pro-$p'$ completion. So that $\hat \Z= \prod_{\ell\not= p}\Z_\ell$.
Then $\hat G_\ell$ can be identified with the  pro-$\ell$ completion
of $G$.
Given a connected scheme $X$, let $\pi_1^{et}(X)$
denote Grothendieck's  \'etale fundamental group \cite{sga1}, where we
ignore the base point. This is the profinite group for which the category of
finite sets with continuous action is equivalent to the category of
\'etale covers of $X$.
Let us write $\pi_1^{{p'}}(X)$ and
$\pi_1^\ell(X)$ instead of $\pi_1^{et}(X)_{{p'}}$ and $\pi_1^{et}(X)_\ell$.
 Given an
algebraically closed  field $k$ of characteristic $p$, let $\cP(k)$
denote the class of pro-$p'$ groups which are isomorphic to
$\pi_1^{{p'}}(X)$, where $X$ is a smooth projective $k$-variety.
$\cP(\C)$ is the class of profinite completions of  topological
fundamental groups of complex smooth projective varieties.
Set $\cP(p)=\cP(\overline{\mathbb{F}}_p)$, where $\overline{\mathbb{F}}_p$
is the algebraic closure of the prime field of characteristic $p$
(so that  $\overline{\mathbb{F}}_0=\bar \Q$). There is no loss in
focusing on this case because of the following fact.

\begin{prop}
  If $k$ is an algebraically closed field of characteristic $p$,
  $\cP(k)= \cP(p)$.
\end{prop}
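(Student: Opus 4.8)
The plan is to establish the two inclusions $\cP(p)\subseteq \cP(k)$ and $\cP(k)\subseteq \cP(p)$ separately, both resting on the invariance of the prime to $p$ fundamental group under extension of algebraically closed base fields and under specialization in a smooth proper family. For the first inclusion, I would use that $\overline{\mathbb{F}}_p$, being the algebraic closure of the prime field of characteristic $p$, embeds into any algebraically closed field $k$ of that characteristic. Given a smooth projective variety $X_0$ over $\overline{\mathbb{F}}_p$, I would set $X=X_0\times_{\overline{\mathbb{F}}_p}k$ and invoke the theorem that the prime to $p$ quotient of $\pi_1^{et}$ of a proper variety is unchanged under extension of the algebraically closed base field \cite{sga1}. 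This yields $\pi_1^{{p'}}(X)\cong\pi_1^{{p'}}(X_0)$, so that every group in $\cP(p)$ already lies in $\cP(k)$.

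For the reverse inclusion the main work is a spreading-out and specialization argument. Let $X$ be a smooth projective $k$-variety. Since $X$ is cut out by finitely many equations, there is a finitely generated $\overline{\mathbb{F}}_p$-subalgebra $R\subseteq k$, which I may take to be a domain, and a smooth projective morphism $\mathcal{X}\to S=\spec R$ with geometrically connected fibers such that $\mathcal{X}\times_R k\cong X$; this is the standard limit argument for finitely presented morphisms, applied after shrinking $S$ so as to achieve smoothness, properness, and connectedness of the fibers. The scheme $S$ is integral and of finite type over $\overline{\mathbb{F}}_p$, so by the Nullstellensatz its closed points have residue field finite over $\overline{\mathbb{F}}_p$, hence equal to $\overline{\mathbb{F}}_p$. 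Choosing such a closed point $s$, the fiber $X_s=\mathcal{X}_s$ is a smooth projective variety over $\overline{\mathbb{F}}_p$, so $\pi_1^{{p'}}(X_s)\in\cP(p)$.

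It remains to identify $\pi_1^{{p'}}(X)$ with $\pi_1^{{p'}}(X_s)$. The inclusion $R\subseteq k$ defines a geometric point $\bar\eta\colon\spec k\to S$ lying over the generic point, with geometric fiber $X$, while $\spec \overline{\mathbb{F}}_p\to S$ is a geometric point at $s$ with geometric fiber $X_s$. Since $\mathcal{X}\to S$ is smooth and proper with connected fibers and $S$ is connected, the prime to $p$ fundamental groups of all geometric fibers are isomorphic: this is the specialization theorem of \cite{sga1} (Exp.\ X), which asserts that the specialization homomorphism is an isomorphism on prime to $p$ quotients, so that $\pi_1^{{p'}}$ of the fibers forms a locally constant system over $S$. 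Applying this to the two geometric points above gives $\pi_1^{{p'}}(X)\cong\pi_1^{{p'}}(X_s)\in\cP(p)$, which finishes both inclusions.

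I expect the specialization step to be the main obstacle, namely arranging the comparison between the geometric generic fiber over $k$ and the special fiber over $\overline{\mathbb{F}}_p$ so that it is covered by the available form of the theorem. When $\dim S\ge 2$ one cannot join $\eta$ and $s$ by a single trait, so the cleanest route is to use local constancy of $\pi_1^{{p'}}$ on the connected base $S$ rather than a single discrete valuation ring; alternatively one reduces to the henselian trait case by passing through a chain of specializations connecting $s$ to $\eta$ and using connectedness of $S$ at each step. The hypotheses of smoothness and properness are essential, since the $p$-part and the wild behaviour of the fundamental group are precisely what is discarded by restricting to the prime to $p$ quotient.
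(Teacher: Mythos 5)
Your proposal is correct and follows essentially the same route as the paper: both inclusions are handled by invariance of $\pi_1^{p'}$ under extension of algebraically closed base fields, and the reverse inclusion by spreading $X$ out to a smooth projective family over a finite type $\overline{\mathbb{F}}_p$-base and invoking the specialization isomorphism of SGA1, Exp.\ X for prime to $p$ quotients to compare the geometric generic fibre with a fibre over an $\overline{\mathbb{F}}_p$-point. The only cosmetic difference is that the paper phrases the comparison via the geometric generic point rather than your "local constancy over $S$" formulation, which amounts to the same citation.
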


\begin{proof}
  Clearly $\cP(p)\subseteq \cP(k)$ because extension of scalars from
  $\overline{\mathbb{F}}_p$ to $k$ will not change the fundamental
  group \cite[exp X, cor 1.8]{sga1}. Suppose that $X$ is a smooth projective
  $k$-variety. It is defined over a finitely generated extension $K$
  of $\overline{\mathbb{F}}_p$, i.e. there exists a $K$-scheme $X_K$
  such that $X=X_K\times_{Spec\, K} Spec\, k$. Let $S$ be variety defined over
  $\overline{\mathbb{F}}_p$ with function field $K$. After shrinking
  $S$, if necessary, we can assume that there is a smooth projective
  morphism $\mathcal{X}\to S$ with geometric generic fibre $X_K$. Choose an
  $\overline{\mathbb{F}}_p$ rational point $y_0\in S$ and let $\eta$
  denote the geometric generic point. Then
  $\pi_1^{{p'}}(\mathcal{X}_{y_0})\cong \pi_1^{{p'}}(\mathcal{X}_{\eta})\cong\pi_1^{{p'}}(X)$ by \cite[exp X,
  cor 3.9]{sga1}.
\end{proof}

\begin{lemma}\label{lemma:fi}
  If $G\in \cP(p)$ and $H\subset G$ is open, then $H\in \cP(p)$.
\end{lemma}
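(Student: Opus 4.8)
The plan is to realize the open subgroup $H$ as the prime to $p$ fundamental group of a finite étale cover of $X$. Write $G=\pi_1^{p'}(X)=\pi/N$, where $\pi=\pi_1^{et}(X)$ and $N=\ker(\pi\to G)$. Since $G$ is pro-$p'$ and $H$ is open, the index $[G:H]$ divides the order of a finite quotient of $G$ and is therefore a $p'$-number. Let $H'\subset\pi$ be the preimage of $H$; it is open, contains $N$, and has index $[\pi:H']=[G:H]$ prime to $p$. By Grothendieck's theory $H'$ corresponds to a connected finite étale cover $f\colon Y\to X$ of degree prime to $p$, with $\pi_1^{et}(Y)\cong H'$. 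As $f$ is finite étale and $X$ is smooth projective, $Y$ is again smooth projective, so $\pi_1^{p'}(Y)\in\cP(p)$. It therefore suffices to identify $\pi_1^{p'}(Y)=(H')_{p'}$ with $H$.

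Thus the whole statement reduces to the group-theoretic claim that $N=\ker(H'\to (H')_{p'})$; write $N_{H'}$ for the latter kernel. One inclusion is immediate: since $H=H'/N$ is a closed subgroup of the pro-$p'$ group $G$, it is itself pro-$p'$, so the surjection $H'\to H'/N$ factors through the maximal pro-$p'$ quotient, giving $N_{H'}\subseteq N$. The content is the reverse inclusion $N\subseteq N_{H'}$, equivalently that $H'/N$ is already the maximal pro-$p'$ quotient of $H'$.

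For this I would use the description of these kernels via Sylow theory for profinite groups. If $P$ is a pro-$p$-Sylow subgroup of $\pi$, then $N$ is the closed normal subgroup of $\pi$ topologically generated by $P$, and likewise $N_{H'}$ is the closed normal subgroup of $H'$ topologically generated by a pro-$p$-Sylow of $H'$. Because $[\pi:H']$ is prime to $p$, I may choose $P$ inside $H'$, and then $P$ is simultaneously a pro-$p$-Sylow of $H'$. The key observation is that $N\subseteq H'$ by construction, so every $\pi$-conjugate $gPg^{-1}$ lies in $N$, hence in $H'$; being a pro-$p$-Sylow of $H'$ of the same order as $P$, it is $H'$-conjugate to $P$ and therefore lies in $N_{H'}$. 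Since $N$ is topologically generated by these conjugates, $N\subseteq N_{H'}$, completing the identification.

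The step I expect to be the main obstacle is exactly this last conjugacy argument: for a general open subgroup the maximal pro-$p'$ quotient does not commute with restriction, precisely because $\pi$-conjugates of a pro-$p$-Sylow need not remain inside the subgroup. What rescues the situation is the combination of the index being prime to $p$ (so that $P$ is a common pro-$p$-Sylow of $\pi$ and $H'$) and the containment $N\subseteq H'$ (so that all $\pi$-conjugates of $P$ are trapped inside $H'$, where Sylow conjugacy applies). Verifying that $N$ really is the closed normal subgroup generated by a pro-$p$-Sylow is the one other point to check, but this is the standard characterization of the kernel of the maximal pro-$p'$ quotient.
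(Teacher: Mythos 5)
Your proposal is correct and follows the same route as the paper, which simply notes that $H=\pi_1^{p'}(Y)$ for the finite \'etale cover $Y\to X$ corresponding to the preimage of $H$ in $\pi_1^{et}(X)$. The Sylow-conjugacy argument you give is a sound and complete justification of the one point the paper leaves implicit, namely that the maximal pro-$p'$ quotient of $\pi_1^{et}(Y)$ is exactly $H$ (using that the index is prime to $p$ and that the kernel $N$ is contained in $H'$).
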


\begin{proof}
   If $G=\pi_1^{{p'}}(X)$ then $H=\pi_1^{{p'}}(Y)$ for some \'etale cover  $Y\to X$.
\end{proof}

Given a finitely generated $\Z_\ell$ module $V$ 
with a continuous action by a profinite
group $G$, we define
$$H^i(G, V)
:=\varprojlim_n H^i(G,
V/\ell^nV)$$
$$H^i(G,V\otimes \Q_\ell)=H^i(G,V)\otimes \Q_\ell$$
This  naive definition will suffice  for our
purposes, although there  is one place  where we are better off with the
more subtle definition of  Jannsen \cite{jannsen}.  We summarize what we need about
this in the following lemma.

\begin{lemma}\label{lemma:HS}
  Suppose that $1\to K\to G\to H\to 1$ is an exact sequence of profinite groups
  with $G$ topologically finitely generated and $V$ a finitely
  generated $\Z_\ell$-module with continuous $H$ action.
 Then there is the usual $5$-term exact sequence of Hochschild-Serre
$$0\to H^1(H,V)\to H^1(G,V)\to H^0(H, H^1(K,V))\to H^2(H,V)\to H^2(G,V)$$
\end{lemma}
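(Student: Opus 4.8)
The plan is to establish the sequence at each finite level and then pass to the inverse limit. For fixed $n$, set $V_n = V/\ell^nV$, which is a finite $H$-module and hence, via $G\to H$, a finite $G$-module on which $K$ acts trivially. The Hochschild--Serre spectral sequence $H^p(H, H^q(K, V_n))\Rightarrow H^{p+q}(G,V_n)$ for profinite groups with finite coefficients is standard, and its low-degree terms yield the classical five-term exact sequence
$$0\to H^1(H,V_n)\to H^1(G,V_n)\to H^1(K,V_n)^H\to H^2(H,V_n)\to H^2(G,V_n),$$
using $V_n^K=V_n$. These form an inverse system in $n$, with transition maps induced by $V_{n+1}\twoheadrightarrow V_n$.

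The second step is to apply $\varprojlim_n$. I would split the five-term sequence into the short exact pieces $0\to A_n\to B_n\to Q_n\to 0$, $0\to Q_n\to C_n\to R_n\to 0$, and $0\to R_n\to D_n\to S_n\to 0$, where $A_n=H^1(H,V_n)$, $B_n=H^1(G,V_n)$, $C_n=H^1(K,V_n)^H$, $D_n=H^2(H,V_n)$, $E_n=H^2(G,V_n)$, and $Q_n=\im(B_n\to C_n)$, $R_n=\im(C_n\to D_n)$, $S_n=\im(D_n\to E_n)$. Chasing the left-exactness of $\varprojlim$ through these pieces, exactness of the limit sequence at \emph{every} spot reduces to the vanishing of $\varprojlim^1 A_n$ and $\varprojlim^1 Q_n$; notably, no control of the possibly infinite terms $C_n,D_n,E_n$ or of $R_n$ is needed. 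Now both $A_n$ and $Q_n$ are finite: since $G$ is topologically finitely generated, so is its quotient $H$, whence $H^1(H,V_n)$ and $H^1(G,V_n)$ are finite, and $Q_n$ is a subquotient of the latter. An inverse system of finite groups is automatically Mittag--Leffler, so the relevant $\varprojlim^1$ terms vanish and the limit sequence is exact.

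Finally I would identify the limits with the groups in the statement. For the outer terms this is just the definition $H^i(-,V)=\varprojlim_n H^i(-,V_n)$. For the middle term I use that $(-)^H$ is left exact, hence commutes with $\varprojlim_n$, giving
$$\varprojlim_n\bigl(H^1(K,V_n)^H\bigr)=\bigl(\varprojlim_n H^1(K,V_n)\bigr)^H=H^1(K,V)^H=H^0(H,H^1(K,V)).$$

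The main obstacle is precisely the exactness after passing to the limit, i.e. the $\varprojlim^1$ bookkeeping. Its entire content lies in the finiteness of the two $H^1$-systems, which is exactly what the topological finite generation of $G$ provides; the analysis above shows this suffices and, pleasantly, sidesteps any need to know that $H^2(H,V_n)$ is finite. This is the point at which Jannsen's continuous cohomology gives a cleaner account, since there the relevant spectral sequence and its $\varprojlim^1$ corrections are built in from the outset.
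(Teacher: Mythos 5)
Your proof is correct, but it takes a genuinely different route from the paper's. The paper works inside Jannsen's continuous cohomology $H^*_{cont}$: it invokes the Hochschild--Serre five-term sequence for $H^*_{cont}$ (available by general derived-functor formalism) and then uses Jannsen's comparison sequence $0\to \varprojlim^1 H^{i-1}(G,V/\ell^nV)\to H^i_{cont}(G,V)\to H^i(G,V)\to 0$ to identify $H^i_{cont}$ with the naive $H^i$ for $i\le 2$, the $\varprojlim^1$ term dying because $H^0$ and $H^1$ with finite coefficients are finite. You instead take the finite-level five-term sequences and pass to the limit directly, and your bookkeeping is right: splitting into the short exact pieces and using left exactness of $\varprojlim$, exactness of the limit sequence at every spot does reduce to $\varprojlim^1 A_n=0$ and $\varprojlim^1 Q_n=0$, so no control of $H^1(K,V_n)^H$, $H^2(H,V_n)$, $H^2(G,V_n)$ or $R_n$ is needed; and your identification of the middle term via the fact that $(-)^H$ commutes with inverse limits is fine. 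Both arguments ultimately rest on the identical finiteness input --- that $H^1$ of a topologically finitely generated profinite group with finite coefficients is finite --- which you assert as standard but the paper actually proves (pass to an open normal subgroup $G_1$ acting trivially, use the inflation-restriction sequence, and note $H^1(G_1,V_n)=Hom_{cont}(G_1,V_n)$ is finite because $G_1$ is again topologically finitely generated); it would be worth including that verification, since it is the one genuinely nontrivial step. What your approach buys is self-containedness: you avoid constructing the continuous-cohomology spectral sequence and citing Jannsen. What the paper's approach buys is that the diagram chase is packaged once and for all into Jannsen's formalism, together with the slightly stronger conclusion $H^i(G,V)\cong H^i_{cont}(G,V)$ for $i\le 2$.
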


\begin{proof}
Following Jannsen, we define $H^i_{cont}(G,V)$ as
the $i$th derived functor of 
$$V\mapsto \varprojlim_n H^0(V/\ell^nV)$$
The Hochschild-Serre spectral sequence, and the resulting $5$ term
sequence, for $H^*_{cont}$ can be constructed in the usual way.
By \cite[(2.1)]{jannsen}, we have an exact sequence 
\begin{equation}\label{eq:lim1}
0\to {\varprojlim}^1 H^{i-1}(G, V/\ell^n V)\to H^i_{cont}(G,V)\to H^i(G,V)\to 0  
\end{equation}

When $i\le 2$, we claim that $H^{i-1}(G,V/\ell^n V)$
is finite. For $i=1$ this is clear  because $V$ is finitely generated. So we have to show this for $i=2$.
We can find an open normal subgroup $G_1$ which acts trivially on $V/\ell^nV$.
Then by Hochschild-Serre, we have an exact sequence
$$H^1(G/G_1,V/\ell^nV)\to H^1(G,V/\ell^nV)\to H^1(G_1, V/\ell^nV)$$
The finiteness of the middle group is a consequence of the finiteness of the outer groups.
The first group is finite, because both $G/G_1$ and $V/\ell^nV$ are. The
last group 
$H^{1}(G_1,V/\ell^n V) $ is isomorphic to $Hom_{cont}(G_1, V/\ell^n V)$.
 By assumption, $G$ contains a finitely generated 
dense subgroup $\Gamma$. The group $\Gamma\cap G_1$ is easily seen to be finitely generated and dense in $G_1$.
Therefore $Hom_{cont}(G_1, V/\ell^n V)$ finite and the claim is proved.

The   Mittag-Leffler condition holds for $H^{i-1}(G,V/\ell^n V)$  and $i\le 2$ by the previous claim.
Therefore  the $\varprojlim^1$ in \eqref{eq:lim1}
vanishes,  and so $H^i(G,V)\cong H^i_{cont}(G,V)$ for $i\le 2$. By the same argument, $H^i(H,V)\cong H^i_{cont}(H,V)$ .
So the $5$-term sequence for $H^*_{cont}$ can be
identified with the one given  in the statement of the lemma.
\end{proof}

\begin{lemma}\label{lemma:Gabl}
  If $G$ is a profinite group and $V$ an abelian pro-$\ell$ group, then
$$(G/[G,G])_\ell\cong G_\ell/[G_\ell,G_\ell]$$
$$Hom(G,V)\cong Hom(G_\ell,V)$$
where $Hom$ is the group of continuous homomorphisms.
\end{lemma}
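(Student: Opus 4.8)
The plan is to reduce both isomorphisms to the universal property of the maximal pro-$\ell$ quotient, so I would begin by recording that property precisely: any continuous homomorphism $\phi\colon G\to P$ into a pro-$\ell$ group $P$ factors uniquely through the canonical surjection $G\to G_\ell$. To prove this, write $P=\varprojlim_i P_i$ with each $P_i$ a finite $\ell$-group. For each $i$ the composite $G\to P\to P_i$ has image a finite $\ell$-group, so its kernel is an open normal subgroup of $G$ with $\ell$-group quotient, and therefore contains the kernel $N$ of $G\to G_\ell$ by the very definition of the maximal pro-$\ell$ quotient. Intersecting over all $i$ gives $N\subseteq\ker\phi$, so $\phi$ factors through $G_\ell=G/N$; uniqueness is automatic since $G\to G_\ell$ is surjective.

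The second isomorphism is then immediate. Since $V$ is in particular a pro-$\ell$ group, precomposition with $G\to G_\ell$ defines a map $Hom(G_\ell,V)\to Hom(G,V)$, and the universal property says every continuous homomorphism $G\to V$ arises from a unique one on $G_\ell$, so this map is a bijection. It is visibly additive, hence an isomorphism of abelian groups.

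For the first isomorphism I would construct two mutually inverse maps directly, where throughout $[G,G]$ means the closed commutator subgroup so that the abelianizations $G^{ab}=G/[G,G]$ and $G_\ell^{ab}=G_\ell/[G_\ell,G_\ell]$ are profinite. The composite $q\colon G\to G_\ell\to G_\ell^{ab}$ is continuous with abelian pro-$\ell$ target, so it factors first through $G^{ab}$ (target abelian) and then through $(G^{ab})_\ell$ (target pro-$\ell$), yielding $\alpha\colon (G/[G,G])_\ell\to G_\ell/[G_\ell,G_\ell]$. Symmetrically, the composite $r\colon G\to G^{ab}\to (G^{ab})_\ell$ has abelian pro-$\ell$ target, so it factors through $G_\ell$ (pro-$\ell$) and then through $G_\ell^{ab}$ (abelian), yielding $\beta\colon G_\ell/[G_\ell,G_\ell]\to (G/[G,G])_\ell$. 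Tracing the definitions shows $\beta\circ\alpha\circ r=\beta\circ q=r$ and $\alpha\circ\beta\circ q=\alpha\circ r=q$; since the images of $G$ under $r$ and $q$ are dense and $\beta\circ\alpha$, $\alpha\circ\beta$ fix these images, both composites are the identity by continuity. (Equivalently, both sides corepresent the functor $V\mapsto Hom(G,V)$ on abelian pro-$\ell$ groups, so Yoneda gives the isomorphism; I prefer the explicit maps.)

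The only real subtlety, and the step deserving care, is the universal-property reduction in the first paragraph: one must pass through the finite $\ell$-group quotients $P_i$ rather than arguing with $P$ directly, and keep track of closures of commutator subgroups and density of the image of $G$ when checking that the two maps in the last step are inverse. Everything else is formal manipulation of continuous homomorphisms.
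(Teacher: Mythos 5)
Your proof is correct and follows essentially the same route as the paper: both arguments play the universal properties of abelianization and of the maximal pro-$\ell$ quotient against each other to produce the two factorizations $(G/[G,G])_\ell\to G_\ell/[G_\ell,G_\ell]$ and back, which are mutually inverse. The only cosmetic differences are that you prove the $Hom$ statement directly from the universal property rather than deducing it from the first isomorphism as the paper does, and you spell out the density argument showing the two composites are the identity, which the paper leaves implicit.
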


\begin{proof}
It is enough to prove the first isomorphism, because the second is a
consequence of it.
Since $(G/[G,G])_\ell$ is an abelian pro-$\ell$ group, the homomorphism $G\to
(G/[G,G])_\ell$ factors through the abelianization of the maximal
pro-$\ell$ quotient $G_\ell/[G_\ell,G_\ell]$. So we have a
homomorphism $G_\ell/[G_\ell,G_\ell]\to (G/[G,G])_\ell$.
On the
other hand, the map $G\to G_\ell/[G_\ell,G_\ell]$ must factor through
the maximal pro-$\ell$ quotient of the abelianization
$(G/[G,G])_\ell$.
This gives the inverse.
\end{proof}

\begin{prop}\label{prop:cup}
Suppose that $X$ is a connected scheme of finite type over $k$.
  Let $G$ be a quotient of $\pi_1^{et}(X)$ by a closed
  normal subgroup, such that $G$ dominates
 $\pi_1^\ell(X)$. Given a finitely generated
  $\Z_\ell$-module $V$ with continuous  $G$-action, there exists
a homomorphism to $\ell$-adic cohomology
\begin{equation}
  \label{eq:grouptoetale}
H^i(G, V) \to H^i(X,V)
\end{equation}
This is compatible with the cup products
$$H^i(G, V)\otimes H^j(G, V')\to H^{i+j}(G, V\otimes V')$$
and 
$$H^i(X, V)\otimes H^{j}(X, V')\to H^{i+j}(X, V\otimes V')$$
 The map \eqref{eq:grouptoetale} is an isomorphism when $i\le 1$.
\end{prop}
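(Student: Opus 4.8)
The plan is to reduce everything to finite coefficients and to factor the map through the \'etale fundamental group. Both sides are defined as $\varprojlim_n$ over the reductions $M_n = V/\ell^n V$, so it suffices to produce, functorially in the finite $\pi_1^{et}(X)$-module $M_n$, a cup-product compatible comparison $H^i(\pi_1^{et}(X), M_n)\to H^i(X,M_n)$, and to compose it with the inflation $H^i(G,M_n)\to H^i(\pi_1^{et}(X),M_n)$ coming from the quotient map $\pi_1^{et}(X)\to G$ (the $G$-action makes each $M_n$ a $\pi_1^{et}(X)$-module, hence a lisse sheaf on $X$). Passing to the limit in $n$ then yields \eqref{eq:grouptoetale}. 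For the comparison arrow I would invoke the standard identification between the continuous cohomology of $\pi_1^{et}(X)$ and \'etale cohomology: the finite \'etale site of $X$ is the classifying site of the profinite group $\pi_1^{et}(X)$, and its inclusion into the full \'etale site induces a morphism of topoi whose pullback on cohomology is the map we want; concretely it is the edge homomorphism of the Hochschild--Serre spectral sequence $H^p(G_\alpha, H^q(X_\alpha,M_n))\Rightarrow H^{p+q}(X,M_n)$, taken in the limit over the finite Galois covers $X_\alpha\to X$ with group $G_\alpha$. Both pullback along a morphism of topoi and inflation are homomorphisms of cohomology rings, so compatibility with cup products is automatic; this settles the first assertions.

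It remains to treat $i\le 1$. For $i=0$ both groups equal $V^{G}=V^{\pi_1^{et}(X)}$, since the action factors through $G$ and $X$ is connected, and the comparison map is the identity. For $i=1$ I would argue in two steps. First, the comparison $H^1(\pi_1^{et}(X),M_n)\to H^1(X,M_n)$ is an isomorphism for every finite $M_n$: this is the classical description of $H^1$ by isomorphism classes of $M_n$-torsors, which in both settings are governed by $\pi_1^{et}(X)$. Taking $\varprojlim_n$ gives $H^1(\pi_1^{et}(X),V)\cong H^1(X,V)$. Second, I must show that the inflation $H^1(G,V)\to H^1(\pi_1^{et}(X),V)$ is an isomorphism. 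Applying the five-term sequence of Lemma \ref{lemma:HS} to $1\to N\to \pi_1^{et}(X)\to G\to 1$, with $N=\ker(\pi_1^{et}(X)\to G)$, injectivity is automatic and surjectivity reduces to the vanishing $H^1(N,V)^{G}=0$. Since $N$ acts trivially on the pro-$\ell$ module $V$, Lemma \ref{lemma:Gabl} rewrites this as $Hom(N_\ell,V)^{G}=0$.

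This last vanishing is where the hypothesis that $G$ dominates $\pi_1^\ell(X)$ enters, and it is the step I expect to be the main obstacle. Domination gives $G_\ell=\pi_1^\ell(X)$ and $N\subseteq \ker(\pi_1^{et}(X)\to \pi_1^\ell(X))$. Suppose $\phi\colon N_\ell\to V$ were a nonzero $G$-equivariant homomorphism; let $W$ be the closure of its image, a nontrivial pro-$\ell$ group, and form $E=\pi_1^{et}(X)/\ker\phi$, an extension $1\to W\to E\to G\to 1$ on which $E$ acts through the $G$-action on $V$. Taking maximal pro-$\ell$ quotients, $E_\ell$ surjects onto $G_\ell=\pi_1^\ell(X)$ and is itself a pro-$\ell$ quotient of $\pi_1^{et}(X)$, hence a quotient of $\pi_1^\ell(X)$. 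Because $\pi_1^{et}(X)$, and therefore $\pi_1^\ell(X)$, is topologically finitely generated and so Hopfian, the two surjections $\pi_1^\ell(X)\twoheadrightarrow E_\ell\twoheadrightarrow \pi_1^\ell(X)$ compose to an isomorphism, which forces $W$ to die in $E_\ell$.

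The delicate point is converting ``$W$ dies in $E_\ell$'' into the triviality of $\phi$ itself. This is clean when $W$ is central in $E$ -- in particular when $G$ is pro-$\ell$, which is exactly the situation exploited in the later hard Lefschetz and Demushkin arguments -- since then $E$ is pro-$\ell$, is pinched between two copies of $\pi_1^\ell(X)$, and the Hopf property gives $W=1$ directly, contradicting $\phi\ne 0$. In the general dominating case one must additionally control the prime-to-$\ell$ part of the $G$-action on $W$, and here I would lean on the $\varprojlim{}^1$/finiteness bookkeeping already isolated in the proof of Lemma \ref{lemma:HS}, which kills the finite contributions that persist only at finite level $M_n$ and disappear after passing to the $\ell$-adic limit. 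Granting this, inflation is an isomorphism in degree one, and combined with the first step we obtain that \eqref{eq:grouptoetale} is an isomorphism for $i\le 1$.
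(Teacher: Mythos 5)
Your construction of the comparison map and its cup--product compatibility is sound and close in spirit to the paper's (the paper uses the simplicial model $cosk(Y\to X)_\dt\cong (Y\times EH_\dt)/G$, you use the morphism of topoi / Hochschild--Serre edge map; either works), and your treatment of $i=0$ and of the torsor identification $H^1(\pi_1^{et}(X),M_n)\cong H^1(X,M_n)$ is correct. The gap is exactly the step you flag yourself, and it is not a technicality: the vanishing $H^1(N,V)^G=0$ is, by the very five-term sequence you invoke, equivalent to the surjectivity of inflation $H^1(G,V)\to H^1(\pi_1^{et}(X),V)$ together with injectivity of $H^2(G,V)\to H^2(\pi_1^{et}(X),V)$ on the image of transgression -- so your reduction restates the problem rather than solving it. Moreover, the purely group-theoretic implication ``$W$ dies in $E_\ell$, hence $\phi=0$'' is false outside the pro-$\ell$ case: take $E=\Z_\ell\rtimes \Z/q$ with $q\mid \ell-1$ acting through a nontrivial character $\chi$, $G=\Z/q$, $N=W=\Z_\ell$, $V=\Z_\ell$ with action $\chi$. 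Then $E_\ell=1$, so $W$ dies in $E_\ell$, yet $\phi=\mathrm{id}$ is a nonzero $G$-equivariant homomorphism and $H^1(G,V)=0\neq \Z_\ell\cong H^1(E,V)$. Whatever excludes this must use the geometry of $X$ in an essential way, and the appeal to the $\varprojlim^1$ bookkeeping of lemma \ref{lemma:HS} cannot supply it: that lemma only identifies the naive inverse limit with Jannsen's continuous cohomology, whereas the obstruction above already lives at finite level and survives the limit. (Your clean sub-case, $G$ pro-$\ell$, also does not cover the paper's applications, which need $G=\pi_1^{p'}(X)$.)

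The paper circumvents this entirely by never asserting that inflation from $\pi_1^{et}(X)$ to $G$ is an isomorphism for all modules. For a finite module with \emph{trivial} action it identifies both sides with $Hom(\pi_1^\ell(X),V)$: one has $H^1(G,V)\cong Hom(G,V)\cong Hom(G_\ell,V)$ by lemma \ref{lemma:Gabl}, $G_\ell=\pi_1^\ell(X)$ by the domination hypothesis, and $H^1(X,V)\cong Hom(\pi_1^{et}(X),V)\cong Hom(\pi_1^\ell(X),V)$ by the torsor description and lemma \ref{lemma:Gabl} again. For nontrivial finite coefficients it passes to a Galois cover $Y\to X$ trivializing $V$, with group $H$ a finite quotient of $G$, and compares the two five-term Hochschild--Serre sequences (for $G\to H$ and for $Y\to X$) by the five-lemma, the outer terms being handled by the trivial-coefficient case on $Y$; the $\ell$-adic statement then follows by taking inverse limits. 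If you want to salvage your outline, replace the attempted proof of $H^1(N,V)^G=0$ by this covering-space reduction.
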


\begin{proof}
We start by proving the analogous statements over
$\Lambda_n=\Z/\ell^n\Z$, and then take the limit.
We indicate two different constructions of the map; the first is
simpler, but second gives more, and so it is the one that we use. First of all,
both $H^i(G,-)$ and $H^i(X,-)$ are $\delta$-functors
  from the category of discrete $\Lambda_n[G]$-modules to abelian groups,
  with the first universal in the sense of \cite{groth}.
 By the connectedness assumption $H^0(G, V) \cong
  H^0(X,V)$. Thus we get a map
$$H^*(G, -) \to H^*(X,-)$$
of $\delta$-functors.  
Compatibility with cup products can be proved in principle by
dimension shifting and induction, but it
seems simpler to give an alternate interpretation. 
Suppose that $Y\to X$ is a Galois \'etale cover
with Galois group $H$ a quotient of $G$ by an open normal subgroup. 
Then we have an isomorphism of simplicial schemes
$$cosk(Y\to X)_\dt \cong (Y\times EH_\dt)/G$$ where  $cosk(Y\to X)_\dt$
is the simplicial scheme
$$\ldots Y\times_X Y\rightrightarrows Y$$
and $EH_\dt\to BH_\dt$
is a simplicial model for the universal $H$-bundle over the
classifying space  (cf \cite[\S 5.1, 6.1]{deligneH} or \cite[pp 99-100]{milne}). The projection $cosk(Y\to
X)_\dt \to EH_\dt/G=BH_\dt$ induces a map from the
bar complex $C^\dt(H, V)$ with coefficients in an $H$-module $V$ to the
{C}ech complex $\check{C}^\dt({Y\to X}, V)$. Thus we obtain maps 
$$H^*(G, V) \to H^*(H,V)\to H^*(X,V)$$
The compatibility with cup products now follows easily from the standard
simplicial formulas for them \cite[p 172]{milne}.

We have already seen that the map \eqref{eq:grouptoetale} is an isomorphism when $i=0$.
We next prove that it  is an isomorphism when $i=1$.  First, suppose
that $V$ is a finite $\Lambda_n$-module with trivial $G$-action.
Then  have an isomorphism
$$H^1(G, V)\cong Hom(G, V)$$
On the other hand, we have
$$H^1(X,V) \cong Hom(\pi_1^{et}(X),V)$$
because both  groups classify $V$-torsors
\cite[pp 121-123]{milne}.  By lemma \ref{lemma:Gabl}, we can also identify
$$Hom(\pi_1^{et}(X),V)\cong Hom(G,V)$$
Now suppose that $V$ is a nontrivial finite
$\Lambda_n[G]$-module. Let $\pi:Y\to X$ be an \'etale cover such that
$\pi^*V$ is trivial. We can assume that $\pi$ is  a Galois  cover, with
Galois group $H$ a quotient of $G$. Set $K=\pi_1^{et}(Y)$. Then $K$
acts trivially on $\pi^*V$. It follows that 
\begin{equation}
  \label{eq:grouptoetale2}
H^i(K,V)\cong H^i(Y,\pi^*V),\quad  i=0,1
\end{equation}
and this isomorphism is compatible with the $H$ action.
Then Hochschild-Serre gives
a commutative diagram 
$$
\xymatrix{
 0\ar[r] & H^1(H,  H^0(K,V))\ar[r]\ar[d]^{\cong} & H^1(G,V)\ar[r]\ar[d]^{f} & H^0(H, H^1(K,V))\ar[r]\ar[d]^{\cong} & H^2(H, H^0(K,V))\ar[d]^{\cong} \\ 
 0\ar[r] & H^1(H,H^0(Y,V))\ar[r] & H^1(X,V)\ar[r] & H^0(H,H^1(Y,V))\ar[r] & H^2(H,H^0(X,V))
}
$$
with exact rows. The maps labeled by $\cong$ are isomorphisms by
\eqref{eq:grouptoetale2}. Thus $f$ is an isomorphism by the $5$-lemma.

To summarize, we have canonical multiplicative homomorphisms
$$H^i(G, V) \to H^i(X,V)$$
for $\Lambda_n[G]$-modules,
which are isomorphisms for $i\le 1$.  The  proposition follows by taking the inverse limit over $n$.
\end{proof}

We will apply the last proposition in the two cases $G=\pi_1^{p'}(X)$
and $G=\pi_1^\ell(X)$. It is worth remarking that  when $V=\Q_\ell$, we have $H^1(\pi_1^{p'}(X),
V)\cong H^1(\pi_1^\ell(X),V)$, but there is no reason to expect this
for higher cohomology.

We have the following basic finiteness property. 

\begin{thm}[Raynaud ]
Any element of $P(p)$ is topologically finitely presented.
\end{thm}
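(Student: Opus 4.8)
The plan is to combine Grothendieck's finite generation theorem with the specialization theorem to reduce finite presentation to characteristic zero, and to handle the general (possibly non-liftable) case by fibering $X$ over a curve via a Lefschetz pencil. By Proposition \ref{prop:cup}'s setting and Proposition 1.1 we may assume $X$ is a smooth projective variety over $\overline{\mathbb{F}}_p$. Topological finite generation of $\pi_1^{{p'}}(X)$ is already the classical theorem of Grothendieck for proper schemes over an algebraically closed field; as supporting evidence, the generator rank of each pro-$\ell$ quotient equals $\dim_{\mathbb{F}_\ell}H^1(\pi_1^{{p'}}(X),\mathbb{F}_\ell)$, which by Lemma \ref{lemma:Gabl} and Proposition \ref{prop:cup} agrees with $\dim_{\mathbb{F}_\ell}H^1(X,\mathbb{F}_\ell)$ and is therefore finite. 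The real content of the theorem is the bound on relations.

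First I would dispose of the liftable case. Suppose $X$ lifts to a smooth projective scheme $\mathcal{X}\to \spec W$ over a complete discrete valuation ring $W$ with residue field $\overline{\mathbb{F}}_p$ and fraction field of characteristic zero. Grothendieck's specialization theorem \cite[exp X]{sga1} then gives an isomorphism $\pi_1^{{p'}}(X)\cong \pi_1^{{p'}}(\mathcal{X}_{\bar\eta})$ with the prime to $p$ fundamental group of the geometric generic fibre. After choosing an embedding of the relevant field into $\C$, the latter is the pro-$p'$ completion of the topological fundamental group of a compact complex manifold, which is finitely presented as a discrete group; hence its completion is topologically finitely presented.

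The main obstacle is that a smooth projective variety in characteristic $p$ need not lift to characteristic zero. To treat the general case I would argue by induction on $d=\dim X$, the base case $d=1$ being the classical finite presentation of the tame fundamental group of a curve. For $d\ge 2$, choose a Lefschetz pencil; after blowing up its smooth base locus, an operation that does not alter $\pi_1^{{p'}}$, one obtains $\tilde X\to \PP^1$ with smooth geometric generic fibre $F$ of dimension $d-1$ and only finitely many singular fibres, each carrying a single ordinary double point. Writing $S\subset \PP^1$ for the critical locus, $U=\PP^1\setminus S$, and $\tilde X_U$ for its preimage, the family $\tilde X_U\to U$ is smooth proper, so the homotopy exact sequence yields $1\to \pi_1^{{p'}}(F)\to \pi_1^{{p'}}(\tilde X_U)\to \pi_1^{{p'}}(U)\to 1$, in which $\pi_1^{{p'}}(F)$ is finitely presented by induction and $\pi_1^{{p'}}(U)$ is free pro-$p'$ of finite rank $|S|-1$. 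Passing from $\tilde X_U$ to $\tilde X$ imposes one further relation for each point of $S$, coming through Picard--Lefschetz from the corresponding vanishing cycle. Since each ingredient contributes finitely many generators and relations, $\pi_1^{{p'}}(X)\cong \pi_1^{{p'}}(\tilde X)$ is topologically finitely presented.

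I expect the delicate point to be the precise form of the homotopy exact sequence in the étale, prime to $p$ setting, together with the identification of the relation attached to each node: one must verify that the local monodromy around a critical value is tamely controlled by the associated vanishing cycle, so that the finitely many singular fibres produce only finitely many relations. This is exactly where the characteristic $p$ phenomena are confined, and is the substance of Raynaud's argument.
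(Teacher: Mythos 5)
The paper does not actually prove this statement; it simply cites Raynaud \cite[thm 2.3.1, rem 2.3.2]{raynaud}, so your proposal has to be judged as a reconstruction of Raynaud's argument. The overall d\'evissage --- finite generation from Grothendieck, reduction to characteristic zero where possible, induction on dimension by fibering in curves --- is the right philosophy, but as written the inductive step has a genuine gap. The sequence you write, $1\to \pi_1^{p'}(F)\to \pi_1^{p'}(\tilde X_U)\to \pi_1^{p'}(U)\to 1$, is not exact on the left, and this is not cosmetic: what the homotopy sequence actually provides is that the kernel $N$ of $\pi_1^{p'}(\tilde X_U)\to \pi_1^{p'}(U)$ is a \emph{quotient} of $\pi_1^{p'}(F)$ by a closed normal subgroup over which you have no control. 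Topological finite generation passes to such quotients, but topological finite presentation does not (one would need the kernel of the quotient map to be finitely generated as a normal subgroup), so ``$N$ is finitely presented by induction'' does not follow, and the extension argument does not close. Note also that for profinite groups which are not pro-$\ell$, finite presentation cannot be tested by finiteness of $H^2$ with finite coefficients, so there is no quick cohomological patch; this is exactly the sort of difficulty Raynaud's expos\'e is devoted to.

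The second soft spot is the passage from $\tilde X_U$ to $\tilde X$. The claims that $\pi_1^{p'}(\tilde X)$ is the quotient of $\pi_1^{p'}(\tilde X_U)$ by the normal closure of the local monodromy at the points of $S$, and that each critical value contributes only finitely many normal generators (``one relation per node''), constitute the hard local statement: even after discarding wild ramification, the tame local fundamental group of a punctured neighbourhood of a singular fibre is a priori larger than the procyclic group generated by one loop, and identifying its image requires the vanishing-cycle machinery of SGA 7 rather than an appeal to the topological Picard--Lefschetz picture. Your liftable case and the base case $d=1$ are fine (the latter being itself proved by specialization from characteristic zero), but for the purposes of this paper the theorem should simply be quoted from Raynaud, as the author does.
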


\begin{proof}This follows from \cite[thm 2.3.1, rem 2.3.2]{raynaud}. \end{proof}

The analogous statement for K\"ahler groups is a well known consequence of the
finite triangulability of compact manifolds. We wish to point out that
topological finite presentability does not preclude some  fairly wild examples
such as $\prod_{\ell\not= p} \Z\ell\Z$. However, such examples cannot lie in
$\cP(p)$.

\begin{prop}\label{prop:even}
  If $G\in P(p)$, then $G/[G,G]$ is the product of a  finite abelian
  group with $\hat \Z^{b}=\prod_{\ell\not= p} \Z_\ell^{b}$
  where $b$ is an even integer. 
\end{prop}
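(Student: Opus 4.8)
The plan is to compute the abelianization one prime at a time. Since $G=\pi_1^{p'}(X)$ for some smooth projective variety $X$, the group $A:=G/[G,G]$ is an abelian pro-$p'$ group, and it is topologically finitely generated by Raynaud's theorem (finite presentability gives, in particular, finite generation). Such a group decomposes canonically as $A\cong \prod_{\ell\neq p} A_\ell$, where $A_\ell$ is its pro-$\ell$ part, itself a finitely generated $\Z_\ell$-module; thus $A_\ell\cong \Z_\ell^{b_\ell}\oplus T_\ell$ with $T_\ell$ a finite $\ell$-group. By Lemma \ref{lemma:Gabl} I may identify $A_\ell\cong G_\ell/[G_\ell,G_\ell]$, where $G_\ell=\pi_1^\ell(X)$. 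The proposition then amounts to three assertions: that $b_\ell$ is independent of $\ell$, that this common value $b$ is even, and that $T_\ell=0$ for all but finitely many $\ell$.

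For the free rank, I would compute $b_\ell=\dim_{\Q_\ell}\mathrm{Hom}(G_\ell,\Q_\ell)=\dim_{\Q_\ell}H^1(G_\ell,\Q_\ell)$. Applying Proposition \ref{prop:cup} to $G_\ell$ with the trivial module $\Z_\ell$ (an isomorphism in degrees $\le 1$) and tensoring with $\Q_\ell$ gives $H^1(G_\ell,\Q_\ell)\cong H^1(X,\Q_\ell)$, the first $\ell$-adic cohomology of $X$. Now $H^1(X,\Q_\ell)$ is, up to a Tate twist, the rational Tate module $T_\ell(\mathrm{Pic}^0_X)\otimes_{\Z_\ell}\Q_\ell$, and so has dimension $2\dim \mathrm{Pic}^0_X$. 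Hence $b_\ell=2\dim\mathrm{Pic}^0_X$ for every $\ell\neq p$; this is manifestly even and independent of $\ell$, settling the first two assertions with $b=2\dim\mathrm{Pic}^0_X$.

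It remains to control the torsion. Pontryagin duality identifies the dual of $A_\ell$ with $\mathrm{Hom}(G_\ell,\Q_\ell/\Z_\ell)=H^1(G_\ell,\Q_\ell/\Z_\ell)$, and passing to the direct limit in Proposition \ref{prop:cup} over the coefficients $\Z/\ell^n$ identifies this with $H^1(X,\Q_\ell/\Z_\ell)$. Taking the limit over $n$ of the coefficient sequences attached to $0\to\Z_\ell\xrightarrow{\ell^n}\Z_\ell\to\Z/\ell^n\to 0$ yields
$$0\to H^1(X,\Z_\ell)\otimes\Q_\ell/\Z_\ell\to H^1(X,\Q_\ell/\Z_\ell)\to H^2(X,\Z_\ell)_{\mathrm{tors}}\to 0,$$
and since $H^1(X,\Z_\ell)$ is free the first term is divisible; dualizing back gives $T_\ell\cong \mathrm{Hom}(H^2(X,\Z_\ell)_{\mathrm{tors}},\Q_\ell/\Z_\ell)$. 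Thus the finite part of $A$ is finite precisely when $H^2(X,\Z_\ell)$ is torsion free for almost all $\ell$.

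The main obstacle is exactly this last input: the vanishing of the torsion in $H^2(X,\Z_\ell)$ for all but finitely many $\ell$. When $p=0$ I would deduce it from the comparison isomorphism with the singular cohomology of $X(\C)$, which is a finitely generated abelian group and hence has torsion supported at finitely many primes. In characteristic $p$ this is the genuinely delicate point, and I would invoke the known finiteness of torsion in the $\ell$-adic cohomology of a smooth projective variety. Granting it, $T_\ell=0$ for almost all $\ell$, so $\prod_\ell T_\ell$ is a finite abelian group and $A\cong(\text{finite})\times\hat\Z^{b}$ with $b$ even, as claimed.
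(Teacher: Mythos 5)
Your proposal is correct, and the rank computation is essentially the paper's own argument: both reduce to identifying the $\Z_\ell$-dual of the abelianization with $T_\ell \mathrm{Pic}^0(X)_{red}$ (via Kummer, or equivalently via $H^1(X,\Z_\ell(1))$) and reading off $b=2\dim \mathrm{Pic}^0(X)_{red}$. Where you genuinely diverge is the torsion. The paper stays at the level of $H^1$: by Kummer, $\mathrm{Hom}(G/[G,G],\Z/\ell\Z)$ is the $\ell$-torsion of $\mathrm{Pic}(X)$, which for $\ell\gg 0$ equals the $\ell$-torsion of $\mathrm{Pic}^0(X)$, namely $(\Z/\ell\Z)^b$, because $NS(X)$ is finitely generated; comparing with $\mathrm{Hom}(\Z_\ell^b\times A_\ell,\Z/\ell\Z)$ kills $A_\ell$. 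You instead dualize and correctly identify $T_\ell$ with the torsion of $H^2(X,\Z_\ell)$, but then outsource its vanishing for almost all $\ell$ to a black box which, in characteristic $p$, is Gabber's theorem on torsion in $\ell$-adic cohomology --- a much deeper input than the problem requires. Note that your own reduction can be closed with exactly the paper's elementary ingredient: the Kummer sequence gives $0\to NS(X)\otimes\Z_\ell\to H^2(X,\Z_\ell(1))\to T_\ell\mathrm{Br}(X)\to 0$ (since $\mathrm{Pic}^0(k)$ is divisible), and $T_\ell\mathrm{Br}(X)$ is torsion free, so $H^2(X,\Z_\ell(1))_{tors}$ is the $\ell$-primary torsion of the finitely generated group $NS(X)$, which vanishes for $\ell\gg 0$ by the theorem of the base. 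So your route is valid, but the appeal to Gabber can and should be replaced by this one-line consequence of the finite generation of the N\'eron--Severi group.
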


\begin{proof}
  We can decompose $G/[G,G]=\prod_{\ell\not= p} \Z_\ell^{b_\ell}\times A_\ell$,
  where $A_\ell$ is a finite abelian $\ell$-group. We have to show
  that $b_\ell$ is constant and that $A_\ell=0$ for $\ell\gg 0$.
 The Kummer sequence \cite[p 66]{milne} gives an isomorphism
$$Hom(\prod \Z_\ell^{b_\ell}\times A_\ell, \Z_\ell)\cong
 T_\ell Pic(X)=T_\ell Pic^0(X)_{red}$$
where we identify $\Z_\ell \cong \Z_\ell(1)$.
For the last equality, we use the exact sequence
$$0\to Pic^0(X)\to Pic(X)\to NS(X)\to 0$$
and the fact that  the Neron-Severi group $NS(X)$  is finitely
generated.  Since $Pic^0(X)_{red}$ is an abelian variety, it follows
that $b_\ell = b= 2\dim Pic^0(X)_{red}$ (see for example \cite[thm 15.1]{milneA}).

Again by Kummer, we have an isomorphism
$$Hom(\prod \Z_\ell^{b}\times A_\ell, \Z/\ell \Z)\cong
 \ell\text{-torsion subgroup of }Pic(X)$$
Since $NS(X)$ is finitely generated, the $\ell$-torsion subgroups of $Pic(X)$ and $Pic^0(X)$
coincide for all $\ell\gg0$. The $\ell$-torsion subgroup of $Pic^0(X)$
is isomorphic to $(\Z/\ell\Z)^b$. Therefore for $\ell\gg 0$, we must have
$Hom(A_\ell, \Z/\ell\Z)=0$ which implies that $A_\ell=0$. 
\end{proof}

\section{Consequences of hard Lefschetz}

By far the simplest restriction on K\"ahler groups  is what we will refer to as the parity test: a
finitely generated $\Gamma$ cannot be K\"ahler  unless $rank(\Gamma/[\Gamma,\Gamma])$ is even.
This is a consequence of the Hodge decomposition.  Proposition
\ref{prop:even} gives an analogue in our setting. It is convenient to
record the relevant part of it as a corollary.

\begin{cor}\label{cor:even}
If $G\in \cP(p)$,  $rank \, G_\ell/[G_\ell,G_\ell]$ is a fixed even
integer for each $\ell\not=p$.
\end{cor}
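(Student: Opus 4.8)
The plan is to read off the statement directly from Proposition \ref{prop:even} together with the first isomorphism of Lemma \ref{lemma:Gabl}. First I would invoke that isomorphism for the profinite group $G$, namely
$$G_\ell/[G_\ell,G_\ell] \cong (G/[G,G])_\ell,$$
which reduces the computation of the abelianized pro-$\ell$ quotient of $G$ to taking the maximal pro-$\ell$ quotient of the already-understood abelianization $G/[G,G]$.

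Next I would apply Proposition \ref{prop:even}, which supplies a decomposition
$$G/[G,G] \cong A \times \prod_{\ell'\neq p}\Z_{\ell'}^{b},$$
with $A$ a finite abelian group and $b$ an even integer (in fact $b=2\dim Pic^0(X)_{red}$, a quantity manifestly independent of $\ell$). Passing to the maximal pro-$\ell$ quotient annihilates every factor $\Z_{\ell'}^{b}$ with $\ell'\neq\ell$, since there are no nontrivial continuous homomorphisms from a pro-$\ell'$ group to a pro-$\ell$ group, and it replaces $A$ by its $\ell$-primary part $A_\ell$. Combining with the previous identification yields
$$G_\ell/[G_\ell,G_\ell] \cong A_\ell \times \Z_\ell^{b}.$$

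Finally, since $A_\ell$ is finite it contributes nothing to the torsion-free rank, so that $rank\, G_\ell/[G_\ell,G_\ell]=b$ for every $\ell\neq p$; this is a single even integer, independent of $\ell$, which is precisely the assertion. There is no genuine obstacle here, as the entire substance has already been placed in Proposition \ref{prop:even}, whose proof extracts $b$ from the structure of $Pic^0(X)_{red}$ as an abelian variety. The one point I would take care to emphasize is that \emph{rank} must be read as the torsion-free $\Z_\ell$-rank, so that the finite summand $A_\ell$ is correctly discarded; were one instead to count minimal topological generators, the $\ell$-torsion in $A_\ell$ could cause the count to depend on $\ell$, and the cleanly $\ell$-independent even value would be lost.
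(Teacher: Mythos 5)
Your argument is correct and is exactly the derivation the paper intends: the corollary is stated as ``the relevant part'' of Proposition~\ref{prop:even}, and your use of the first isomorphism of Lemma~\ref{lemma:Gabl} to pass from $(G/[G,G])_\ell$ to $G_\ell/[G_\ell,G_\ell]$, followed by discarding the finite $\ell$-part, fills in precisely the implicit step. (The paper later also records a second proof via the symplectic pairing on $H^1(G,\Q_\ell)$ from Theorem~\ref{thm:even}, but your route is the primary one.)
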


The following fact, which refines the previous result, was first observed by Johnson and Rees \cite{jr} in
the K\"ahler group setting.

\begin{thm}\label{thm:even}
  Let $G\in \cP(p)$,  and let $H$ be a quotient of $G$ by a closed normal subgroup
  such that $H$   dominates $G_\ell$. Suppose that $\rho:H\to
  O_n(\Q_\ell)$ is an orthogonal representation such
  that $\rho(H)$ is finite, and let  $V$ be the corresponding
  $H$-module with quadratic form $q:V\otimes V\to \Q_\ell$. Then there exists a linear map $\lambda:H^2(H,\Q_\ell)\to
  \Q_\ell$ such that  $\lambda(q(\alpha\cup \beta))$ defines a symplectic
  pairing  on $H^1(H,V)$.

\end{thm}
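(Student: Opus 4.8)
The plan is to transport the entire question to the $\ell$-adic cohomology of $X$ and then read off nondegeneracy from the hard Lefschetz theorem. Write $G=\pi_1^{p'}(X)$. Since $H$ is a quotient of $G$ that dominates $G_\ell=\pi_1^\ell(X)$, and $G$ is itself a quotient of $\pi_1^{et}(X)$, Proposition \ref{prop:cup} applies to $H$ and yields, for every coefficient module $W$, cup-product compatible maps $H^i(H,W)\to H^i(X,W)$ that are isomorphisms for $i\le 1$; in particular $H^1(H,V)\cong H^1(X,V)$. Before choosing $\lambda$ I would first observe that the pairing $(\alpha,\beta)\mapsto q(\alpha\cup\beta)$ on $H^1(H,V)$, valued in $H^2(H,\Q_\ell)$, is already alternating: for degree-one classes the cup product satisfies $\alpha\cup\beta=-s_*(\beta\cup\alpha)$ in $H^2(H,V\otimes V)$, where $s$ transposes the two factors of $V\otimes V$, and since $q$ is symmetric it absorbs the sign, giving $q(\alpha\cup\beta)=-q(\beta\cup\alpha)$. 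As $\Q_\ell$ has characteristic zero, this skew-symmetry forces the pairing to be alternating for \emph{every} linear $\lambda$, so the only real issue is to choose $\lambda$ making it nondegenerate.

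To produce $\lambda$, fix an ample class $\eta\in H^2(X,\Q_\ell)$, put $d=\dim X$, and let $L$ denote the Lefschetz operator given by cup product with $\eta$. Define $\lambda$ to be the composite $H^2(H,\Q_\ell)\to H^2(X,\Q_\ell)\xrightarrow{\,L^{d-1}\,}H^{2d}(X,\Q_\ell)\xrightarrow{\sim}\Q_\ell$, the last arrow being the degree isomorphism (with the Tate twist suppressed via $\Z_\ell\cong\Z_\ell(1)$, as elsewhere in the paper). Because $\eta^{d-1}$ has trivial coefficients, cupping with it commutes with the map induced by $q$, and by the cup-product compatibility of Proposition \ref{prop:cup} the resulting pairing on $H^1(H,V)\cong H^1(X,V)$ is identified with $(\alpha,\beta)\mapsto\deg\big(q(\alpha\cup\beta)\cup\eta^{d-1}\big)=\deg\big(q(\alpha\cup L^{d-1}\beta)\big)$. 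Thus nondegeneracy of the pairing in the statement is equivalent to nondegeneracy of this last expression on $H^1(X,V)$.

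This final step is precisely hard Lefschetz combined with Poincaré duality: the quadratic form $q$ identifies $V$ with its dual, so Poincaré duality gives a perfect pairing $H^1(X,V)\times H^{2d-1}(X,V)\to\Q_\ell$, $(\alpha,\gamma)\mapsto\deg(q(\alpha\cup\gamma))$, while hard Lefschetz gives an isomorphism $L^{d-1}:H^1(X,V)\xrightarrow{\sim}H^{2d-1}(X,V)$; composing the two is exactly the pairing above. The one genuine point needing care, and the step I expect to be the main obstacle, is the validity of hard Lefschetz for the local system $V$ rather than for constant coefficients. Here one exploits that $\rho(H)$ is finite: letting $\pi:Y\to X$ be the Galois étale cover, with group $\Gamma=\rho(H)$, that trivializes $V$, the sheaf $V$ is a direct summand of $\pi_*\Q_\ell$, so $H^*(X,V)$ is a $\Gamma$-isotypic summand of $H^*(Y,\Q_\ell)$ on which a $\Gamma$-invariant ample class acts by the genuine Lefschetz operator. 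Hard Lefschetz for the smooth projective $Y$ (Deligne's theorem when $p>0$, and classical Hodge theory after an embedding $\bar\Q\hookrightarrow\C$ when $p=0$) then descends to this summand, which is all the argument requires.
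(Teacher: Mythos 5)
Your proposal is correct and follows essentially the same route as the paper: identify $H^1(H,V)$ with $H^1(X,V)$ via Proposition \ref{prop:cup}, take $\lambda$ to be $L^{n-1}$ composed with the trace isomorphism, and obtain nondegeneracy from hard Lefschetz after passing to a finite Galois cover trivializing $V$ and restricting to the relevant summand. Your added remarks on skew-symmetry and the explicit appeal to Poincar\'e duality only make explicit what the paper leaves implicit.
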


\begin{proof}
Suppose that $G=\pi_1^{{p'}}(X)$, where $X$ is an $n$ dimensional
smooth projective variety. Fix an ample line bundle $\OO_X(1)$, and
let $L$ denote the corresponding Lefschetz operator.
We claim that
\begin{equation}
  \label{eq:HL}
H^1(X,V)\times H^1(X,V) \stackrel{q\circ \cup}{\longrightarrow} H^2
(X,\Q_\ell)\stackrel{L^{n-1}}{\longrightarrow} H^{2n}(X,\Q_\ell)\cong
\Q_\ell
\end{equation}
gives a nondegenerate symplectic pairing. When $\rho$ is trivial,
$(V,q)$ is a sum of $n$ copies of $\Q_\ell$ with the standard pairing, and
the claim follows from  the hard Lefschetz theorem for  \'etale cohomology
\cite[4.1.1]{deligne}. In general, let $\pi:Y\to X$ be a Galois \'etale
cover with Galois group $K$ such that $\pi^*V$ is
trivial. We can decompose $\pi_*\pi^*V=V\oplus V'$ under the $K$
action. Let $p:\pi_*\pi^* V\to V$ denote the projection.
We equip $Y$ with the Lefschetz operator corresponding to
$\pi^*\OO_X(1)$. Then  the pairing \eqref{eq:HL}
is obtained by applying $p$ to the  nondegenerate pairing
$$
H^1(Y,\pi^*V)\times H^1(Y,\pi^*V) {\longrightarrow} H^2
(Y,\Q_\ell)\stackrel{L^{n-1}}{\longrightarrow} H^{2n}(Y,\Q_\ell)\cong
\Q_\ell
$$
and the claim follows for general $V$ with finite monodromy.

By proposition~\ref{prop:cup}, we have a commutative diagram
$$
\xymatrix{
 H^1(H,V)\times H^1(H,V)\ar[r]\ar[d]^{\cong} & H^2(H,\Q_\ell)\ar[d]^{\iota}\ar[rd]^{\lambda} &  \\ 
 H^1(X,V)\times H^1(X,V)\ar[r]& H^2(X,\Q_\ell)\ar[r]^{L^{n-1}} & \Q_\ell
}
$$
where $\lambda$ is defined as $L^{n-1}\circ \iota$.
\end{proof}

\begin{proof}[Second proof of corollary \ref{cor:even}]
  $H^1(G, \Q_\ell)$ carries a symplectic pairing, so it must be even dimensional.
\end{proof}

The theorem itself gives more subtle information than the parity
test. For example, we have the following consequence.

\begin{prop}
 Suppose that
$$1\to K\to G\to H\to 1$$
is an extension of pro-$p'$ groups such that   $H^1(H,\Q_\ell)\not=0$
and the trangression
$Hom_H(K,\Q_\ell)\to H^2(H,\Q_\ell)$ is an isomorphism. Then $G\notin \cP(p)$
\end{prop}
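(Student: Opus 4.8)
The plan is to argue by contradiction: assuming $G\in\cP(p)$, I will show that the hypotheses force the cup product pairing on $H^1(G,\Q_\ell)$ to vanish identically while $H^1(G,\Q_\ell)$ is nonzero, and that this is incompatible with Theorem~\ref{thm:even} applied to $G$ itself. Throughout I take the orthogonal representation to be the trivial one-dimensional representation $V=\Q_\ell$, whose quadratic form $q$ is just multiplication. Then Theorem~\ref{thm:even}, applied with the quotient ``$H$'' of that theorem taken to be $G$ (which trivially dominates $G_\ell$), produces a linear map $\lambda\colon H^2(G,\Q_\ell)\to\Q_\ell$ for which $(\alpha,\beta)\mapsto\lambda(\alpha\cup\beta)$ is a nondegenerate symplectic form on $H^1(G,\Q_\ell)$. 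The whole point will be to contradict this nondegeneracy.

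First I would extract the cohomological content of the transgression hypothesis. Since $G\in\cP(p)$ is topologically finitely generated by Raynaud's theorem above, Lemma~\ref{lemma:HS} supplies the five-term exact sequence for $1\to K\to G\to H\to 1$ with coefficients in $\Z_\ell$, and tensoring with $\Q_\ell$ (which is exact) yields
$$0\to H^1(H,\Q_\ell)\xrightarrow{\ \mathrm{inf}\ } H^1(G,\Q_\ell)\to Hom_H(K,\Q_\ell)\xrightarrow{\ \mathrm{tg}\ } H^2(H,\Q_\ell)\xrightarrow{\ \mathrm{inf}\ } H^2(G,\Q_\ell).$$
Because $\mathrm{tg}$ is assumed to be an isomorphism, its injectivity forces the restriction map $H^1(G,\Q_\ell)\to Hom_H(K,\Q_\ell)$ to be zero, so that inflation $H^1(H,\Q_\ell)\to H^1(G,\Q_\ell)$ is an isomorphism; in particular $H^1(G,\Q_\ell)\cong H^1(H,\Q_\ell)\neq 0$ by hypothesis. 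Dually, the surjectivity of $\mathrm{tg}$ forces the inflation map $H^2(H,\Q_\ell)\to H^2(G,\Q_\ell)$ to be the zero map.

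Next I would feed this into the multiplicative structure. Inflation is a ring homomorphism, so for $\alpha,\beta\in H^1(H,\Q_\ell)$ we have $\mathrm{inf}(\alpha)\cup\mathrm{inf}(\beta)=\mathrm{inf}(\alpha\cup\beta)=0$, the last equality because inflation vanishes on $H^2$. Since inflation maps onto $H^1(G,\Q_\ell)$, this shows that the cup product $H^1(G,\Q_\ell)\times H^1(G,\Q_\ell)\to H^2(G,\Q_\ell)$ is identically zero. Consequently $\lambda(\alpha\cup\beta)=0$ for all $\alpha,\beta$, so the form produced by Theorem~\ref{thm:even} is the zero form on the nonzero space $H^1(G,\Q_\ell)$, which cannot be nondegenerate. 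This contradiction shows $G\notin\cP(p)$.

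The conceptual step is the observation that the transgression being onto kills $H^2$-inflation and hence, by multiplicativity, annihilates all cup products of degree-one classes; once this is in place the contradiction with hard Lefschetz is immediate. I do not expect a serious obstacle: the remaining inputs — exactness of $-\otimes\Q_\ell$ and the multiplicativity of inflation as the edge homomorphism of Hochschild–Serre — are standard. The only point needing care is the bookkeeping over $\Q_\ell$, namely confirming that tensoring the integral sequence of Lemma~\ref{lemma:HS} identifies $H^0(H,H^1(K,\Q_\ell))$ with $Hom_H(K,\Q_\ell)$ and that the connecting map is the stated transgression; both follow directly from the definitions in the preliminaries.
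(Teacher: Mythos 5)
Your proposal is correct and follows essentially the same route as the paper: both extract from the five-term Hochschild--Serre sequence that inflation is an isomorphism on $H^1$ and zero on $H^2$, conclude that the cup product on $H^1(G,\Q_\ell)$ vanishes identically, and contradict the nondegenerate symplectic pairing supplied by Theorem~\ref{thm:even}. Your write-up is somewhat more explicit than the paper's (e.g.\ invoking Raynaud's theorem to justify applying Lemma~\ref{lemma:HS}, and spelling out the multiplicativity of inflation), but the argument is the same.
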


\begin{proof}
  From the Hochschild-Serre  sequence (lemma \ref{lemma:HS})
$$
 0\to  H^1(H,\Q_\ell)\stackrel{\alpha}{\longrightarrow} H^1(G,
\Q_\ell)\to  Hom_H(K,\Q_\ell)\stackrel{\sim}{\longrightarrow}
H^2(H, \Q_\ell)\stackrel{\beta}{\longrightarrow} H^2(G,\Q_\ell) 
$$
we conclude that $\alpha$ is an isomorphism and $\beta=0$.
Therefore  
$$\cup: \wedge^2 H^1(G,\Q_\ell)\to H^2(G,\Q_\ell)$$
is zero, because it factors through $\beta$. Thus $G\notin \cP(p)$ by theorem~\ref{thm:even}.
\end{proof}

The conditions of the proposition are easy to check for the following
example.

\begin{cor}
The completion of the Heisenberg group
$$\left \{
\begin{pmatrix}
1 & a &b\\0 & 1& c\\ 0&0&1  
\end{pmatrix}\mid a,b,c\in \hat\Z\right\}$$
is not in $\cP(p)$
\end{cor}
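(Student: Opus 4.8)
The plan is to verify that the completion of the discrete Heisenberg group fits the hypotheses of the preceding proposition, with $H$ taken to be the abelianization and $K$ the center. Write $\Gamma$ for the discrete Heisenberg group of upper-triangular integer matrices, and let $G = \hat\Gamma$ be its pro-$p'$ completion. The center $Z(\Gamma)$ consists of the matrices with $a=c=0$, and is infinite cyclic, generated by the commutator of the two standard generators; the quotient $\Gamma/Z(\Gamma)$ is free abelian of rank $2$. Passing to pro-$p'$ completions, I would set $K = \hat Z(\Gamma) \cong \hat\Z$ and $H = G/K \cong \hat\Z^2$, giving the central extension
\begin{equation*}
1 \to \hat\Z \to G \to \hat\Z^2 \to 1.
\end{equation*}

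First I would record the relevant cohomology of $H = \hat\Z^2$ with $\Q_\ell$ coefficients. Since $H_\ell \cong \Z_\ell^2$, we have $H^1(H,\Q_\ell) \cong \Q_\ell^2$, which is visibly nonzero, satisfying the first hypothesis. The second hypothesis demands that the transgression $d_2 : \mathrm{Hom}_H(K,\Q_\ell) \to H^2(H,\Q_\ell)$ in the Hochschild--Serre spectral sequence be an isomorphism. Both source and target are one-dimensional: $\mathrm{Hom}_H(K,\Q_\ell) = \mathrm{Hom}(\hat\Z,\Q_\ell) \cong \Q_\ell$ (the center being central, the $H$-action is trivial), and $H^2(\hat\Z^2,\Q_\ell) \cong \wedge^2 H^1 \cong \Q_\ell$. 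So it suffices to check that the transgression does not vanish.

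The heart of the argument is this nonvanishing of the transgression, and it is the step I expect to require the most care. The transgression for a central extension is dual to the cohomology class in $H^2(H,K) = H^2(\hat\Z^2,\hat\Z)$ that classifies the extension; equivalently, the commutator pairing $\wedge^2 H \to K$ of the Heisenberg group is exactly the cup product $\wedge^2 H^1(H,\Q_\ell) \to H^2(H,\Q_\ell)$ followed by the transgression. Concretely, if $x,y$ are the generators of $H = \hat\Z^2$, their commutator in $G$ is a generator of the center $K$, so the extension class is the standard generator of $H^2(\hat\Z^2,\hat\Z)$; dualizing, the transgression sends a generator of $\mathrm{Hom}(K,\Q_\ell)$ to the nonzero element $x^* \cup y^* \in H^2(H,\Q_\ell)$. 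This is an isomorphism since both spaces are one-dimensional and the image is nonzero. The only subtlety is ensuring that completing the discrete Heisenberg group does not collapse the commutator relation — i.e., that the image of the center in $G$ remains a copy of $\hat\Z$ and is not absorbed into a larger subgroup — but this follows because the quotient $\Gamma/Z(\Gamma) = \Z^2$ is already residually finite with abelian completion $\hat\Z^2$, so the sequence of completions stays exact with the center surviving as $\hat\Z$. With both hypotheses verified, the preceding proposition gives $G \notin \cP(p)$ immediately.
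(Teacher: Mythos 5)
Your proposal is correct and follows exactly the route the paper intends: the paper offers no written proof beyond "the conditions of the proposition are easy to check," and your verification (center $K\cong\hat\Z$, quotient $H\cong\hat\Z^2$, $H^1(H,\Q_\ell)\neq 0$, transgression identified with pushing forward the extension class to $x^*\cup y^*\neq 0$ between one-dimensional spaces) is precisely that check, matching the paper's subsequent remark that the Heisenberg group is an instance of a universal central extension where transgression is an isomorphism. The only cosmetic point is that the exactness of $1\to\hat\Z\to G\to\hat\Z^2\to 1$ can be read off directly from the matrix description of the group in the statement (the center is the set of matrices with $a=c=0$), so no appeal to exactness of pro-$p'$ completion is actually needed.
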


A more general class of examples where the proposition
applies comes from generalized universal central extensions.
Given a pro-$p'$ group $H$, we have a (generally  noncanonical) 
central extension
\begin{equation}
  \label{eq:univ}
  0\to H_2(H, \hat \Z)\to G\to H\to 1
\end{equation}
with extension class lifting  the identity under the surjection
$$H^2(H, H_2(H,\hat\Z))\to Hom(H_2(H,\hat\Z),H_2(H,\hat\Z))$$
Here the (co)homologies are defined by taking inverse limits of the
usual groups with coefficients in $\Z/n\Z$. Transgression gives an
isomorphism, so that:

\begin{cor} 
If $H^1(H,\Q_\ell)$ is  nonzero, then the group $G$ of the above
extension \eqref{eq:univ} is not in $\cP(p)$.
\end{cor}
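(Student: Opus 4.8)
The plan is to deduce the corollary directly from the preceding proposition by verifying its two hypotheses for the central extension \eqref{eq:univ}. The first hypothesis, $H^1(H,\Q_\ell)\neq 0$, is exactly what we assume, so the entire content is to check that the transgression $\tau\colon Hom_H(K,\Q_\ell)\to H^2(H,\Q_\ell)$ with $K=H_2(H,\hat\Z)$ is an isomorphism. Since \eqref{eq:univ} is a \emph{central} extension, $H$ acts trivially on $K$, so $Hom_H(K,\Q_\ell)=Hom(K,\Q_\ell)=Hom(H_2(H,\hat\Z),\Q_\ell)$, and $H^1(K,\Q_\ell)=Hom(K,\Q_\ell)$ carries the trivial $H$-module structure; the map $\tau$ appearing in the $5$-term sequence of Lemma \ref{lemma:HS} is then the transgression differential $d_2$ of the Hochschild--Serre spectral sequence.

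Next I would identify the transgression with the extension class. For a central extension with class $e\in H^2(H,K)$ the classical formula gives $\tau(\phi)=\pm\,\phi_*(e)$ for $\phi\in Hom(K,\Q_\ell)$, where $\phi_*\colon H^2(H,K)\to H^2(H,\Q_\ell)$ is induced by $\phi\colon K\to\Q_\ell$. I would combine this with the universal coefficient sequence
$$0\to \text{Ext}^1(H_1(H,\hat\Z),\Q_\ell)\to H^2(H,\Q_\ell)\to Hom(H_2(H,\hat\Z),\Q_\ell)\to 0.$$
By the defining property of the extension class of \eqref{eq:univ} --- that $e$ maps to $\text{id}_K$ under $H^2(H,K)\to Hom(H_2(H,\hat\Z),K)=\text{End}(K)$ --- naturality of the universal coefficient surjection $s$ in the coefficient argument shows that $\phi_*(e)$ maps to $\phi_*(\text{id}_K)=\phi$ under $s$. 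Hence the composite $s\circ\tau$ on $Hom(H_2(H,\hat\Z),\Q_\ell)$ is $\pm\,\text{id}$, so $\tau$ is a split injection onto a complement of the Ext term. Finally, because $\Q_\ell$ is a $\Q$-vector space (hence divisible and torsion-free) the Ext term vanishes, so $s$ is an isomorphism and $\tau=\pm\,s^{-1}$ is an isomorphism as well. With both hypotheses of the proposition verified, it yields $G\notin\cP(p)$.

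The main obstacle is carrying out the transgression-equals-extension-class identification and the universal coefficient theorem in the continuous, profinite setting, where every group is defined as an inverse limit of cohomology with $\Z/n\Z$ coefficients. I would first establish everything at the finite level with $\Z/\ell^n\Z$ coefficients, where the transgression formula and the universal coefficient sequence are classical, and where the vanishing of the relevant $\varprojlim^1$ terms (as in Lemma \ref{lemma:HS}) allows the inverse limit to commute with these constructions; then pass to the limit and tensor with $\Q_\ell$. The one genuinely delicate point is ensuring that ``the extension class lifts the identity'' survives in the form needed after taking limits and rationalizing, i.e. that the profinite $H_2(H,\hat\Z)$ used to build \eqref{eq:univ} is precisely the object controlling $H^2(H,\Q_\ell)$ through universal coefficients --- but this is exactly the compatibility already packaged into the remark preceding the corollary.
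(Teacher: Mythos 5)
Your proposal is correct and follows the same route as the paper, which simply invokes the preceding proposition together with the one-line assertion that ``transgression gives an isomorphism'' for the extension \eqref{eq:univ}. You supply the details the paper omits (transgression equals pushforward of the extension class, the universal coefficient surjection, vanishing of the Ext term after rationalizing), and your cautions about working at the finite $\Z/\ell^n\Z$ level before passing to limits are exactly the right ones.
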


The last statement should be compared with \cite[p 717, cor]{reznikov}.

\section{Free products}

 Given two pro-$p'$ groups $G_1$ and $G_2$,  their coproduct in the
 category of pro-$p'$ groups exists \cite[\S 9.1]{rz}. We denote it by
  $G_1\hat * G_2$.  It is closely  related
  to the   usual free product $*$.

 \begin{lemma}\label{lemma:freeprod}
   Given discrete groups $G_i$,
    $\widehat{G_1 * G_2} \cong \hat G_{1}\hat* \hat G_{2}$.
  \end{lemma}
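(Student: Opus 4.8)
The plan is to establish the isomorphism $\widehat{G_1 * G_2} \cong \hat G_1 \hat * \hat G_2$ by exhibiting both sides as solutions to the same universal mapping problem in the category of pro-$p'$ groups. The pro-$p'$ completion functor $\widehat{(-)}$ is left adjoint to the forgetful functor from pro-$p'$ groups to discrete groups, so it preserves all colimits that exist; since the free product $G_1 * G_2$ is the coproduct of $G_1$ and $G_2$ in discrete groups, its pro-$p'$ completion should be the coproduct of $\hat G_1$ and $\hat G_2$ in pro-$p'$ groups, which is precisely $\hat G_1 \hat * \hat G_2$.

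To make this precise I would first recall the defining universal property of $\hat G_1 \hat * \hat G_2$: for every pro-$p'$ group $K$, continuous homomorphisms $\hat G_1 \hat * \hat G_2 \to K$ correspond naturally to pairs of continuous homomorphisms $\hat G_i \to K$. Next I would use the universal property of pro-$p'$ completion, namely that continuous homomorphisms $\hat G_i \to K$ (for $K$ pro-$p'$) correspond to abstract homomorphisms $G_i \to K$; this is essentially the adjunction defining $\hat G_i$. Composing these two bijections, continuous homomorphisms $\hat G_1 \hat * \hat G_2 \to K$ correspond to pairs of abstract homomorphisms $G_i \to K$, which by the universal property of the discrete free product correspond to abstract homomorphisms $G_1 * G_2 \to K$, which in turn correspond to continuous homomorphisms $\widehat{G_1 * G_2} \to K$. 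All these bijections are natural in $K$, so by Yoneda the two pro-$p'$ groups are canonically isomorphic.

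The one point requiring genuine care, and which I expect to be the main obstacle, is the interplay between the abstract free product $G_1 * G_2$ and the completion when the $G_i$ are not themselves finitely generated or residually $p'$: one must check that the natural map $G_1 * G_2 \to \hat G_1 \hat * \hat G_2$ does land in and has dense image, so that the induced map from the completion is well defined, and that there are genuinely enough $p'$-quotients of $G_1 * G_2$ to separate points. Concretely, every continuous homomorphism from $\hat G_1 \hat * \hat G_2$ to a finite $p'$-group factors through a finite $p'$-quotient, and one needs the compatibility of the inverse systems defining the two completions. Once the naturality of the identifications above is verified, the isomorphism follows formally, so the task reduces to carefully invoking the universal properties from \cite[\S 9.1]{rz} rather than to any substantive computation.
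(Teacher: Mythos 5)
Your argument is correct and is essentially the content of the result the paper simply cites: the paper's entire proof of this lemma is a reference to \cite[9.1.1]{rz}, and the standard proof of that statement is exactly your adjunction argument (pro-$p'$ completion is left adjoint to the forgetful functor, hence preserves coproducts). The one caveat you flag as the "main obstacle" is in fact a non-issue: the Yoneda/adjunction argument is purely formal and never requires the canonical maps $G_i \to \hat G_i$ to be injective or the groups to be residually $p'$, and the density you ask for is automatic, since the closure of the image of $G_1 * G_2$ in $\hat G_1 \hat * \hat G_2$ is a pro-$p'$ subgroup through which every homomorphism to a pro-$p'$ group factors, so uniqueness in the universal property forces it to be the whole group.
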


  \begin{proof}
\cite[9.1.1]{rz}.    
  \end{proof}

The completion $\hat F^r$ of the usual free group on $r$
  generators is a free pro-$p'$ group. It can also be expressed as
a coproduct
  $$\hat F^r=\hat \Z\hat *\ldots \hat *\hat \Z \quad (\text{$r$ factors})$$

In \cite{abr}, it is shown that a K\"ahler group cannot be an
extension of  a group with infinitely many ends by a finitely
generated group. We observe that any nontrivial free product other than
$(\Z/2\Z)*(\Z/2\Z)$ has infinitely many ends.
Since we do not (yet) have a theory of ends in the
profinite setting, we give a slightly weaker statement involving the
aforementioned  class. On the other hand, the hypothesis on  the
kernel can be relaxed slightly.

\begin{thm}\label{thm:coprod}
Let $p\not=2$.  Suppose that we have an extension of pro-$p'$ groups
 \begin{equation}
   \label{eq:KGH}
   1\to K\to G\to H\to 1
 \end{equation}
such that 
\begin{enumerate}
\item[(a)] $(K/[K,K])_2/(\text{torsion})$ is a finitely generated
  $\Z_2$-module, and
\item[(b)] $H$ is a nontrivial coproduct other than $(\Z/2\Z)\hat*(\Z/2\Z)$.
\end{enumerate}
Then $G\notin \cP(p)$
\end{thm}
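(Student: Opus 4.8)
The plan is to use Theorem~\ref{thm:even} to derive a contradiction, following the same strategy as the preceding propositions. The key observation is that when $H$ is a nontrivial coproduct $H = H_1 \hat* H_2$, the cohomology of $H$ behaves like that of a wedge: for a coefficient module $V$, the cup product map $H^1(H,V) \otimes H^1(H,V) \to H^2(H, V\otimes V)$ should vanish, or more precisely degenerate enough to violate the nondegeneracy conclusion of the theorem. First I would establish a Mayer--Vietoris type statement for coproducts of pro-$p'$ groups, showing that $H^i(H, V) \cong H^i(H_1, V) \oplus H^i(H_2, V)$ for $i \geq 1$ (with the appropriate correction in degree $0$), which forces all cup products of positive-degree classes to vanish since a class in $H^1(H_1,V)$ cups trivially with a class restricted from $H_2$. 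This is the profinite analogue of the fact that the wedge of two spaces has no cup products between the two factors.

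The difficulty is that Theorem~\ref{thm:even} is a statement about $G$ (via its quotient $H'$ dominating $G_\ell$), not directly about the coproduct $H$ appearing in the extension \eqref{eq:KGH}, so I must transport the degeneracy of cup products upward through the extension. The strategy here is to choose the orthogonal representation cleverly. Since $H$ is a nontrivial coproduct other than $(\Z/2\Z)\hat*(\Z/2\Z)$, at least one factor, say $H_1$, admits a nontrivial finite quotient other than $\Z/2\Z$, hence a nontrivial orthogonal representation $\rho: H_1 \to O_n(\Q_\ell)$ with finite image, which I extend to $H$ and pull back to $G$. Condition (b) excluding $(\Z/2\Z)\hat*(\Z/2\Z)$ is precisely what guarantees enough room to produce such a representation together with a nonzero class in $H^1$; the parity obstruction (Corollary~\ref{cor:even}) alone is insufficient, which is why the refined Theorem~\ref{thm:even} is needed. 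I would then use the Hochschild--Serre sequence of Lemma~\ref{lemma:HS} applied to \eqref{eq:KGH}, where hypothesis (a) on the finite generation of $(K/[K,K])_2$ controls the transgression term $H^0(H, H^1(K,V))$, to show that $H^1(G, V)$ receives its symplectic-relevant classes from $H^1(H, V)$, on which cup products already degenerate.

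The main obstacle I anticipate is the control of the kernel term. The symplectic pairing of Theorem~\ref{thm:even} lives on $H^1(G,V)$, and the inflation map $H^1(H,V) \to H^1(G,V)$ need not be surjective in general; the cokernel is governed by $H^0(H, H^1(K,V))$. Hypothesis (a) is designed exactly to make this term manageable at the prime $\ell = 2$, but I would need to argue carefully that the contribution of $K$ to $H^1(G,V)$ cannot restore nondegeneracy of the pairing. Concretely, the subspace $\im(H^1(H,V) \to H^1(G,V))$ is isotropic for the symplectic form because cup products pulled back from $H$ vanish, so if this subspace is more than half-dimensional we get an immediate contradiction with nondegeneracy. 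Thus the crux is a dimension count: showing that the image of $H^1(H,V)$ is large enough, relative to the tightly controlled kernel contribution, to be a non-isotropic obstruction. I expect finite generation in (a) to bound the kernel contribution by a fixed finite amount while the coproduct structure lets me inflate the $H^1(H_1,V)$ part arbitrarily, tipping the dimension count and yielding $G \notin \cP(p)$.
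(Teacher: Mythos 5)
Your central claim --- that $\im\bigl(H^1(H,V)\to H^1(G,V)\bigr)$ is isotropic because ``cup products pulled back from $H$ vanish'' --- is false, and the argument does not survive without it. For a coproduct $H=H_1\hat* H_2$ the Mayer--Vietoris decomposition only kills the \emph{cross} cup products $H^1(H_1,V)\otimes H^1(H_2,V)\to H^2(H,\cdot)$; cup products of two classes coming from the \emph{same} factor land in $H^2(H_i,\cdot)$ and need not vanish. Concretely, if $H_1$ is the pro-$p'$ completion of a genus-$2$ surface group and $H_2=\Z/3\Z$, then $H^1(H,\Q_\ell)=H^1(H_1,\Q_\ell)$ carries a nondegenerate cup-product pairing into $H^2(H_1,\Q_\ell)$, so $H^1(H,V)$ is as far from isotropic as possible. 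The only genuinely isotropic piece is the contribution of a free factor (where $H^2$ vanishes), which you could extract via the profinite Kurosh theorem, but then your ``tipping the dimension count'' step is exactly the part you leave undone, and it does not close: passing to finite-index subgroups to enlarge the free rank also enlarges $H^1(J,V)$ of the complementary factor, so there is no reason the isotropic piece ever exceeds half of $\dim H^1(G,V)$. You also never use the specific role of the prime $2$ in hypothesis (a), which is a sign that the intended mechanism has been missed.

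The paper's proof uses Theorem~\ref{thm:even} through a different and sharper consequence: a symplectic vector space has \emph{even} dimension, so it suffices to produce one orthogonal $V$ with $\dim H^1(G,V)$ odd. After reducing (via finite quotients, Kurosh, and Lemma~\ref{lemma:fi}) to $H=J\hat*\hat\Z\hat*\hat\Z$, the coefficient module is taken to be one-dimensional, $V=\Q_2$, with the two $\hat\Z$ factors acting through the sign character $\sigma$ and a character $\chi_2\in\{1,\sigma\}$. This is where the prime $2$ and hypothesis (a) enter: $M=(K/[K,K])_2\otimes\Q_2$ is finite dimensional, and after a further finite-index reduction one arranges that $\sigma$ does not occur among the characters of $\hat\Z$ in subquotients of $M$, forcing $H^0(H,H^1(K,V))=\mathrm{Hom}_H(M,V)=0$ and hence $H^1(G,V)\cong H^1(H,V)$ exactly --- this is how (a) ``controls the transgression term,'' not by a bound but by making it zero. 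Mayer--Vietoris then gives $H^1(H,V)\cong H^1(J,V)\oplus 0\oplus H^1(\hat\Z,\Q_{2,\chi_2})$, and the free choice of $\chi_2$ adjusts the parity to be odd. You would need to replace your isotropy/dimension heuristic with this parity computation (or something equally quantitative) for the proof to go through.
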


\begin{proof} Suppose that  $G$ fits into the exact sequence
  \eqref{eq:KGH}  with $H=H_1\hat*H_2$, where $H_i$ are nontrivial and
  not both of order $2$.   We will show that  $G$ cannot lie
in  $\cP(p)$. We first make a reduction to the case where $H$  is of the form
  $J\hat* \hat F^2$.
Choose nontrivial finite quotients $Q_i$ of
$H_i$ such that $|Q_i|>2$ for some $i$.
Let $L\subset H$ be the kernel of the projection $H\to Q_1\times Q_2$.
 Then by  the profinite version of the Kurosh subgroup theorem \cite[9.1.9]{rz},
we see that $L\cong J\hat* \hat F^2$ for some group $J$.  It suffices to prove that the
preimage $\tilde G$ of $L$ in $G$ is not  in $\cP(p)$ by lemma~\ref{lemma:fi}.
Since it  fits into an extension
$$1\to K\to \tilde G\to L\to 1$$
we may replace $G$ by $\tilde G$ and $H$ by $L$.

From the exact sequence
\eqref{eq:KGH}, we get a continuous action of $H$  on $K/[K,K]$.
 Therefore 
$M=(K/[K,K])_2 \otimes_{\Z_2} \Q_2$  is a finite dimensional
representation of $H$.  With respect to the factor 
$\hat F^2=\hat \Z\hat* \hat \Z$ of $H$, we get two actions of $\hat \Z$ on $M$ that we refer
to as the first and second.  Let $\{\xi_1,\ldots, \xi_n\}$ be the
(possibly empty) set of one dimensional characters of $\hat F^2$ corresponding
to one dimensional subquotients of $M$. We may suppose that $\xi_1,\ldots, \xi_m$ are the
characters among these with finite order. Let  $S\subset\hat  F^2$ be the
intersection of kernels of $\xi_1,\ldots, \xi_m$.
The group $S$ is necessarily of the form $\hat F^r$ with
$r\ge 2$ \cite[3.6.2]{rz}. After replacing $H$ by $J \hat * S= (J\hat*
\hat F^{r-2})\hat * \hat F^2$, $J$ by $J\hat*
\hat F^{r-2} $ and $G$ by the  preimage of the new $H$ in the old $G$, we may assume that  the all the characters $\xi_i$
are either trivial or of infinite order.  Let $\xi_i'$ denote the
restrictions of $\xi_i$ to the first factor of $\hat F^2=\hat \Z\hat
*\hat \Z$. Then the 
sign character $\sigma:\hat \Z\to \Z_2\to \Q_2^*$,
$$
\sigma(x)=
\begin{cases}
  +1 &\text{if $x\in 2\hat \Z$}\\
 -1 & \text{otherwise}
\end{cases}
$$
is not in $\{\xi_1',\ldots, \xi_n'\}$.
Let $\chi_1=\sigma$ and $\chi_2\in \{1,\sigma\}$, where the precise choice
will be determined  below.
 Let $V=\Q_2$
denote the $H=J\hat* \hat \Z\hat* \hat \Z$ module where $J$ acts trivially
and the two $\hat \Z$ factors act through $\chi_1$ and $\chi_2$
respectively. We note that $V$ is orthogonal, so that we can apply
theorem~\ref{thm:even} when the time comes.

We now
compute $\dim H^1(G, V)$. From the Hochschild-Serre 
sequence (lemma \ref{lemma:HS}), we obtain the exact sequence
\begin{equation}
  \label{eq:HS}
0\to H^1(H, V)\to H^1(G, V) \to H^0(H, H^1(K, V))  
\end{equation}
We can identify
$$H^0(H, H^1(K, V))\cong H^0(H, Hom(K/[K,K],  V))\cong
Hom_{H}(M, V)$$
Since we chose $\chi_1\notin \{\xi_1',\ldots, \xi_n'\}$,  the latter
space is zero. Therefore, by \eqref{eq:HS} we obtain an isomorphism
$$H^1(G,V) \cong H^1(H,V)$$
By an appropriate
 Mayer-Vietoris sequence \cite[prop 9.2.13]{rz}, we see that
$$H^1(H, V) \cong H^1(J,V)\oplus H^1(\hat \Z, \Q_{2,
  \chi_1})\oplus H^1(\hat \Z,\Q_{2, \chi_2})$$
where the subscripts $\chi_i$ indicate the action.
The middle group on the right  vanishes because $\chi_1$ was nontrivial.
By choosing $\chi_2$  to be trivial or not according to the parity of
$rank(J/[J,J])$, we see that the
right side can be made to have odd dimension. Therefore
$G$ cannot be the pro-$p'$ fundamental group of a smooth projective
variety by theorem~\ref{thm:even}.
\end{proof}

\begin{cor}\label{cor:free}
  A group in $\cP(p)$  cannot decompose as a coproduct of nontrivial 
 pro-$p'$ groups, and in particular it cannot be free.
\end{cor}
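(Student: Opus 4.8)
The plan is to derive everything from Theorem~\ref{thm:coprod} by taking the kernel to be trivial. Suppose $G\in\cP(p)$ decomposes as a nontrivial coproduct $G=G_1\hat * G_2$ of pro-$p'$ groups, and view this as the extension $1\to 1\to G\to G\to 1$ with $K=1$. Hypothesis (a) of Theorem~\ref{thm:coprod} then holds vacuously, since $(K/[K,K])_2=0$ is trivially a finitely generated $\Z_2$-module. Hence, provided $p\neq 2$ and the decomposition is not the single excluded one $(\Z/2\Z)\hat *(\Z/2\Z)$, hypothesis (b) holds with $H=G$ and Theorem~\ref{thm:coprod} gives $G\notin\cP(p)$, a contradiction. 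So the bulk of the statement is immediate, and what remains is to dispose of the excluded coproduct and of the prime $p=2$.

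For the excluded case $G=(\Z/2\Z)\hat *(\Z/2\Z)$ (which can occur only when $p\neq 2$, as otherwise $\Z/2\Z$ is not a pro-$p'$ group), I would use the finite-index trick of Lemma~\ref{lemma:fi}. This group is the pro-$p'$ completion of the infinite dihedral group $\Z/2\Z * \Z/2\Z$, whose rotation subgroup $\Z$ has index $2$; passing to completions via Lemma~\ref{lemma:freeprod}, the subgroup $\hat\Z$ sits inside $G$ as an open subgroup of index $2$. If $G$ were in $\cP(p)$, then Lemma~\ref{lemma:fi} would force $\hat\Z\in\cP(p)$, but the pro-$\ell$ quotient $\Z_\ell$ of $\hat\Z$ has abelianization of rank $1$ for every $\ell\neq p$, contradicting the even-rank statement of Corollary~\ref{cor:even}. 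The ``in particular'' clause about free groups follows at once: a free pro-$p'$ group $\hat F^r$ of rank $r\ge 2$ is the nontrivial coproduct $\hat\Z\hat *\cdots\hat *\hat\Z$ (and is never $(\Z/2\Z)\hat *(\Z/2\Z)$), while $\hat F^1=\hat\Z$ is excluded by Corollary~\ref{cor:even} exactly as above.

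The genuine obstacle is the case $p=2$, where Theorem~\ref{thm:coprod} is unavailable because its proof runs over $\Q_2$ and relies on the sign character $\sigma\colon\hat\Z\to\{\pm1\}$ to produce an odd-dimensional $H^1$. For $p=2$ this character does not exist: a pro-$2'$ group has no quotient of order $2$, so its only orthogonal characters are trivial, and the smallest nontrivial orthogonal representations available are $2$-dimensional. One would therefore have to re-run the argument with an odd prime $\ell\neq 2$, replacing $\sigma$ by a nontrivial finite-order orthogonal representation (for instance a rotation of odd order on $\Q_\ell^2$ pulled back from a finite quotient of one factor) and recomputing $\dim H^1(G,V)$ via the Mayer--Vietoris sequence for coproducts before invoking Theorem~\ref{thm:even}. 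The step I expect to be hardest is precisely the parity bookkeeping: such higher-dimensional detectors contribute to $H^1$ in even-dimensional blocks, so the clean $\pm1$ adjustment of the original proof no longer forces $\dim H^1(G,V)$ to be odd, and one must either choose $V$ more carefully or first reduce to the freely indecomposable structure using Lemma~\ref{lemma:fi} together with the profinite Kurosh theorem.
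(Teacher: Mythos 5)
Your argument is precisely the paper's: the author reads Theorem~\ref{thm:coprod} with $K=1$ (so hypothesis (a) is vacuous) and disposes of the single excluded case $(\Z/2\Z)\hat*(\Z/2\Z)$ by observing that it contains $\hat\Z$ as an open subgroup, which Lemma~\ref{lemma:fi} together with Corollary~\ref{cor:even} rules out --- the same two steps you give, with your explicit treatment of $\hat F^1=\hat\Z$ a welcome addition. The $p=2$ obstacle you isolate is genuine, but it is not a defect of your write-up relative to the paper's: the paper's one-line proof (``the only case not covered by the last theorem\ldots'') silently inherits the hypothesis $p\neq 2$ from Theorem~\ref{thm:coprod}, and your diagnosis of why the argument cannot be transported to $p=2$ --- a pro-$2'$ group admits no order-two character, so the only finite-order orthogonal detectors are even-dimensional and the parity count collapses --- is accurate. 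Read the corollary with the standing assumption $p\neq 2$; under that reading your proof is complete and identical to the paper's.
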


\begin{proof}
  The only case not covered by the last theorem is
  $(\Z/2\Z)\hat*(\Z/2\Z)$, but since this contains $\hat \Z$ as an open
  subgroup, it is ruled out by corollary~\ref{cor:even}.
\end{proof}

\begin{cor}
  Suppose that $G$ satisfies all of the assumptions of the theorem but
  with {\rm (a)} replaced by 
  \begin{enumerate}
  \item[(a')] $K$ is topologically finitely generated. 
  \end{enumerate}
Then $G\notin \cP(p)$.
\end{cor}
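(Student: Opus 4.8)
The plan is to show that the new hypothesis (a$'$) is in fact stronger than the hypothesis (a) used in Theorem~\ref{thm:coprod}, so that the corollary reduces immediately to that theorem. All the other hypotheses---namely $p\neq 2$ and condition (b) on $H$---are unchanged, so it suffices to verify that (a$'$) implies (a).

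First I would note that if $K$ is topologically finitely generated, then so is its abelianization $K/[K,K]$, since a continuous quotient of a topologically finitely generated profinite group is again topologically finitely generated. Passing to the maximal pro-$2$ quotient, the same reasoning shows that $(K/[K,K])_2$ is a topologically finitely generated abelian pro-$2$ group. Such a group is nothing but a finitely generated module over $\Z_2$, and since $\Z_2$ is a principal ideal domain the structure theorem applies: $(K/[K,K])_2 \cong \Z_2^a \times T$, where $a\ge 0$ and $T$ is a finite abelian $2$-group, which is precisely the torsion subgroup. Quotienting by torsion leaves $\Z_2^a$, a finitely generated $\Z_2$-module. This is exactly condition (a).

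Having established that (a$'$) implies (a), I would simply invoke Theorem~\ref{thm:coprod} to conclude that $G\notin \cP(p)$. There is essentially no obstacle here; the one point requiring a moment's care is the identification of a topologically finitely generated abelian pro-$2$ group with a finitely generated $\Z_2$-module, so that the classification of finitely generated modules over the discrete valuation ring $\Z_2$ can be brought to bear and the torsion-free quotient computed.
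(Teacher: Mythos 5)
Your proof is correct and is essentially the paper's own argument: the paper's entire proof is the single line ``(a$'$) implies (a),'' and you have simply filled in the routine verification that a topologically finitely generated pro-$p'$ group has a finitely generated $\Z_2$-module as the torsion-free part of the pro-$2$ quotient of its abelianization. No further comment is needed.
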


\begin{proof}
  (a') implies (a).
\end{proof}

\begin{cor}\label{cor:hatABR}
  Suppose that  $1\to K\to G\to H_1*H_2\to 1$ is an exact sequence of discrete
  groups, with $K$ finitely generated and $\hat H_{i}$ nontrivial
  and not both of order $2$. Then $\hat G\notin \cP(p)$.
\end{cor}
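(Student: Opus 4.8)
The plan is to reduce this to the profinite statement, Theorem~\ref{thm:coprod}, by passing to pro-$p'$ completions. Pro-$p'$ completion is right exact, so applying it to the given extension of discrete groups produces an exact sequence
$$\hat K \to \hat G \to \widehat{H_1*H_2}\to 1,$$
and Lemma~\ref{lemma:freeprod} identifies the rightmost term with the coproduct $\hat H_1\hat* \hat H_2$. Writing $N$ for the kernel of the continuous surjection $\hat G\to \hat H_1\hat* \hat H_2$, the image of $\hat K$ (equivalently, of $K$) is dense in the closed normal subgroup $N$.

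It remains to check that the completed extension $1\to N\to \hat G\to \hat H_1\hat* \hat H_2\to 1$ meets the hypotheses of Theorem~\ref{thm:coprod}. Since $K$ is finitely generated, $\hat K$ is topologically finitely generated; any continuous image of such a group is again topologically finitely generated, and hence so is its closure $N$. Thus $N$ satisfies condition~(a') of the corollary following the theorem, which in turn implies condition~(a). For condition~(b), the factors $\hat H_1, \hat H_2$ are nontrivial and not both of order $2$ by hypothesis, so $\hat H_1\hat* \hat H_2$ is a nontrivial coproduct other than $(\Z/2\Z)\hat*(\Z/2\Z)$. Both hypotheses therefore hold, and Theorem~\ref{thm:coprod} yields $\hat G\notin \cP(p)$.

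The step I expect to require the most care is the passage to completions, specifically the fact that completion is only right exact: one cannot claim that $\hat K\to \hat G$ is injective, nor that $\hat K$ equals the kernel $N$. This is exactly why it is essential that the hypothesis constrains the \emph{discrete} kernel $K$ by finite generation rather than constraining the profinite kernel directly. Finite generation of $K$ passes to topological finite generation of $N$ through the continuous surjection $\hat K\to N$ with dense image, so we never need to identify $N$ with $\hat K$ on the nose; it suffices that $N$ is a topological quotient of a topologically finitely generated group.
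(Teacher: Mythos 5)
Your proposal is correct and follows essentially the same route as the paper: apply right exactness of pro-$p'$ completion together with Lemma~\ref{lemma:freeprod} to get the exact sequence $\hat K\to \hat G\to \hat H_1\hat*\hat H_2\to 1$, observe that the kernel is topologically finitely generated (it is the continuous, hence closed, image of $\hat K$), and invoke Theorem~\ref{thm:coprod} via hypothesis~(a'). The only difference is that you spell out the right-exactness subtlety explicitly, which the paper leaves implicit.
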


\begin{proof}
  By  \cite[prop 3.2.5]{rz} and lemma~\ref{lemma:freeprod}, we have an exact sequence
$$ \hat K\to \hat G\stackrel{f}\to \hat H_{1}\hat * \hat
H_{2}\to 1$$
Therefore $\ker f$ is topologically finitely generated.
\end{proof}

As an illustration of the use of this theorem, we show that the pure
braid group does not lie in this class. This is a direct translation of the argument in \cite{arapura} for
showing that  braid groups are not K\"ahler.  Recall that $B_n$ is
given by generators $s_1,\ldots, s_{n-1}$ with relations $s_is_{i+1}s_i=
s_{i+1}s_is_{i+1}$ and $s_is_j=s_js_i$ if $|i-j|>1$. This maps to the
symmetric group $S_n$by sending $s_i\mapsto (i\, i+1)$. The kernel is
the pure braid group $P_n$. More geometrically, $P_n$ is the
fundamental group of the configuration space of $n$ distinct ordered points in
the plane.

\begin{prop}
$\hat P_{n}\notin \cP(p)$.
\end{prop}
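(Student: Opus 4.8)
The plan is to realize the pure braid group $P_n$ as an iterated extension and then apply Corollary~\ref{cor:hatABR} at the top stage. The key structural fact is the Fadell--Neuwirth fibration: forgetting the last point gives a fibration of configuration spaces whose fiber is the plane minus $n-1$ points. On fundamental groups this yields a split exact sequence
\begin{equation}\label{eq:FN}
1\to F_{n-1}\to P_n\to P_{n-1}\to 1
\end{equation}
where $F_{n-1}$ is the free group on $n-1$ generators. Here I want to read this in the opposite direction from a naive induction: rather than viewing $P_{n-1}$ as the quotient, I would iterate \eqref{eq:FN} downward until I isolate the very last free factor. The cleanest formulation is that $P_n$ sits in an extension $1\to K\to P_n\to F_{n-1}\to 1$, where $F_{n-1}=\pi_1(\C\setminus\{n-2 \text{ points}\})$ is the quotient by the normal subgroup $K$ generated by the configuration of the first $n-1$ points, and $K$ is itself finitely generated (being the fundamental group of a configuration space, hence of a smooth affine variety with finitely generated $\pi_1$).

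First I would set up \eqref{eq:FN} for the top forgetting map, giving the exact sequence $1\to K\to P_n\to F_{n-1}\to 1$ with $K$ finitely generated. Second, since $n\ge 3$ (for $n\le 2$ the pure braid group is abelian and handled separately, or the statement is vacuous/trivial), the free group $F_{n-1}$ decomposes as a nontrivial free product $F_{n-1}=\Z * F_{n-2}$, so it has the form $H_1*H_2$ with $\widehat{H_1}=\hat\Z$ and $\widehat{H_2}=\hat F^{n-2}$ both nontrivial and not both of order $2$. Third, I would invoke Corollary~\ref{cor:hatABR} directly: it takes an exact sequence of discrete groups $1\to K\to G\to H_1*H_2\to 1$ with $K$ finitely generated and $\widehat{H_i}$ nontrivial and not both of order $2$, and concludes $\hat G\notin\cP(p)$. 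Applying this with $G=P_n$ and $H_1*H_2=\Z*F_{n-2}$ gives $\hat P_n\notin\cP(p)$ immediately.

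The only genuine obstacle is verifying the hypotheses of Corollary~\ref{cor:hatABR}, namely that the kernel $K$ is finitely generated and that the quotient really is a nontrivial free product of the required type. The free-product structure of $F_{n-1}$ is automatic for $n\ge 3$, so the substantive point is the finite generation of $K$. The cleanest way to secure this is to note that $K$ is the fundamental group of the fiber configuration space of the iterated Fadell--Neuwirth tower truncated above the last factor; equivalently, $K$ is an iterated extension of free groups $F_{n-2},F_{n-3},\ldots,F_1$, each finitely generated, so $K$ is finitely generated as an extension-by-extension of finitely generated groups. I would also need the small cases: $P_1$ is trivial and $P_2\cong\Z$, so the statement has content only for $n\ge 3$, where the argument above applies. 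Once finite generation of $K$ is in hand, the conclusion is a one-line application of the corollary, so the paper's own machinery does all the real work.
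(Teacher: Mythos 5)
Your overall strategy --- exhibit $P_n$ as an extension of a nontrivial free product by a finitely generated group and quote Corollary~\ref{cor:hatABR} --- is exactly the paper's, and your treatment of the small cases ($P_2\cong\Z$ fails the parity test) is also the paper's. But the key exact sequence you claim, $1\to K\to P_n\to F_{n-1}\to 1$ with the \emph{free group as the quotient}, is not established and does not follow from Fadell--Neuwirth. The fibration ``forget the last point'' puts the free group in the \emph{kernel} position: $1\to F_{n-1}\to P_n\to P_{n-1}\to 1$, and iterating downward only ever produces the quotients $P_{n-1}, P_{n-2},\ldots, P_2=\Z$, the last of which is not a nontrivial free product. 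You cannot simply ``read the sequence in the opposite direction'': the sequence splits, so $P_n\cong F_{n-1}\rtimes P_{n-1}$, but a semidirect product retracts onto its quotient factor, not onto its normal free factor (the action of $P_{n-1}$ on $F_{n-1}$ is nontrivial, so there is no induced surjection $P_n\to F_{n-1}$ from this structure). Your description of $K$ as ``the normal subgroup generated by the configuration of the first $n-1$ points'' conflates the two sides: the configuration of the first $n-1$ points is precisely the quotient $P_{n-1}$, not a normal subgroup with free quotient. (There is also an indexing slip: $\pi_1(\C\setminus\{n-2\text{ points}\})=F_{n-2}$, not $F_{n-1}$.)

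What is actually needed, and what the paper supplies, is a genuine surjection of $P_n$ onto a nontrivial free product with finitely generated kernel. The paper gets this from $B_3=\langle a,b\mid a^2=b^3\rangle$: killing the center gives $B_3\twoheadrightarrow \Z/2\Z * \Z/3\Z$, the Kurosh subgroup theorem shows the image $f(P_3)$ of the finite-index subgroup $P_3$ is a free product with a nonabelian free factor, and the kernel of $P_3\to f(P_3)$ is the infinite cyclic center; composing with the Fadell--Neuwirth surjections $P_n\to P_3$ (which \emph{do} have finitely generated kernels, as you correctly note) finishes the argument. A minimal repair of your write-up would be to replace your claimed sequence by the standard isomorphism $P_3\cong F_2\times\Z$ and project onto $F_2=\Z * \Z$; as written, though, the step that feeds Corollary~\ref{cor:hatABR} is missing.
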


\begin{proof}
We have $P_2=\Z$, so $\hat P_{2}\notin \cP(p)$ by
corollary~\ref{cor:even}. The group $B_3$ is generated by $a=s_1s_2s_1$
and $b=s_1s_2$ with the relation $a^2=b^3$.
There is a surjective homomorphism from $f:B_3\to \Z/2\Z *
  \Z/3\Z$ which sends $a$ and $b$ to the generators of $\Z/2\Z $ and $\Z/3\Z$
respectively. The kernel of $f$  is the cyclic group generated by
$a^2\in P_3$.  Thus we have an extension
$$0\to \Z\to P_3\to f(P_3)\to 1$$
By Kurosh's subgroup theorem \cite[\S 5.5]{serre}  the  image $f(P_3)$ is a free
product of a nonabelian free group and some additional factors. Therefore $\hat P_{3}\notin \cP(p)$ by
corollary~\ref{cor:hatABR}. When $n>3$, projection of the configuration
spaces gives a fibration resulting a surjective homomorphism $P_n\to P_3$
with finitely generated kernel. It follows that $P_n\to f(P_3)$ is
again surjective with finitely generated kernel. So once again corollary~\ref{cor:hatABR}
shows that $\hat P_{n}\notin \cP(p)$.

\end{proof}

\section{One relator groups}

Recently, Biswas-Mahan \cite{bm} and Kotschick \cite{kotschick} have classified one relator
K\"ahler groups:  they are all fundamental groups of one dimensional
compact  orbifolds with at most one orbifold point. In more explicit terms,
such a group  would be of the form
$$\Gamma_{g,m}=  \begin{cases}
  \langle x_1,\ldots, x_{2g}\mid ([x_1,x_{g+1}]\ldots
[x_g,x_{2g}])^m\rangle & \text{if $g>0$}\\
\Z/m\Z= \langle x\mid x^m\rangle & \text{if $g=0$}
\end{cases}
$$
(Note that both \cite{bm, kotschick} classify infinite one relator
K\"ahler groups, but the statement as given above is an immediate consequence.)
 We prove a  pro-$\ell$ version for large $\ell$ assuming that the
 relation lies in the commutator subgroup. To reconcile  the statement
 below with the one just given,
 observe that $(\hat \Gamma_{g,m})_\ell\cong (\hat \Gamma_{g,1})_\ell$ when  $\ell$ is coprime to $m$.

\begin{thm}\label{thm:1rel}
  Suppose that $G\in \cP(p)$ is the pro-$p'$  completion of a
  discrete one-relator group.  Then there exists an explicit finite  set $S$ of
  primes such that if
  $\ell\notin S$,   the maximal pro-$\ell$ quotient $G_\ell$ of  $G$ is isomorphic to the pro-$\ell$ completion of the genus
  $g$ surface group $\Gamma_{g,1}$
where $g=\frac{1}{2}\dim H^1(G,\Q_\ell)$.
\end{thm}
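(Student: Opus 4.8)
The plan is to prove that for all but finitely many $\ell$ the group $G_\ell$ is a Demushkin group with torsion-free abelianization, and then to invoke the classification of such groups. Write $G=\hat\Gamma$ for a discrete one-relator group $\Gamma=\langle x_1,\dots,x_d\mid r\rangle$ with $r$ in the commutator subgroup, so that $G_\ell=\hat\Gamma_\ell$ admits the pro-$\ell$ presentation with $d$ generators and the single relation $r$. First I would record the abelianization. By Lemma~\ref{lemma:Gabl} we have $G_\ell^{ab}=(G^{ab})_\ell$, and Proposition~\ref{prop:even} writes $G^{ab}$ as a finite group times $\hat\Z^{b}$ with $b$ even; hence for $\ell\gg0$ the finite part disappears and $G_\ell^{ab}\cong\Z_\ell^{2g}$ is torsion-free of even rank $2g=\dim H^1(G,\Q_\ell)$ (Corollary~\ref{cor:even}). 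In particular all $d=2g$ of the $x_i$ are needed to generate, so the presentation is minimal with one relation and $\dim_{\mathbb{F}_\ell}H^2(G_\ell,\mathbb{F}_\ell)\le1$. The case $g=0$ forces $G_\ell^{ab}=0$, whence $G_\ell=1$, the pro-$\ell$ completion of $\Gamma_{0,1}$; so I may assume $g\ge1$.

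Next I would pin down $H^2$ and the cup product over $\Q_\ell$ using hard Lefschetz. Applying Theorem~\ref{thm:even} with $H=G_\ell$ and $V=\Q_\ell$ the trivial orthogonal module produces a linear map $\lambda:H^2(G_\ell,\Q_\ell)\to\Q_\ell$ for which $\lambda\circ\cup$ is a nondegenerate alternating form on $H^1(G_\ell,\Q_\ell)$. Since $g\ge1$ this form is nonzero, so $H^2(G_\ell,\Q_\ell)\ne0$ and therefore $\dim_{\mathbb{F}_\ell}H^2(G_\ell,\mathbb{F}_\ell)=1$ for $\ell\gg0$ (matching the $\Q_\ell$-dimension and the upper bound above). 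Because $H^2(G_\ell,\Q_\ell)$ is now one-dimensional, $\lambda$ is an isomorphism, so the cup product $\cup:\wedge^2H^1(G_\ell,\Q_\ell)\to H^2(G_\ell,\Q_\ell)\cong\Q_\ell$ is itself nondegenerate.

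The heart of the argument is to upgrade this to nondegeneracy with $\mathbb{F}_\ell$-coefficients, uniformly in $\ell$, which also yields the explicit set $S$. Here I would use that, because $\Gamma$ is one-relator, the integral cup product $\wedge^2 H^1(\Gamma,\Z)\to H^2(\Gamma,\Z)$ is a single alternating form on fixed finitely generated abelian groups, and the comparison $\Gamma\to G_\ell$ identifies it with the $\ell$-adic pairing above after tensoring with $\Z_\ell$ (for $\ell\gg0$, using Proposition~\ref{prop:cup} for multiplicativity and the agreement of integral and $\ell$-adic $H^1,H^2$). The $\Q_\ell$-nondegeneracy just established shows the discriminant $D$ of this fixed integral form is nonzero, and its reduction modulo $\ell$ remains nondegenerate precisely when $\ell\nmid D$. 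I would therefore let $S$ consist of $p$, the prime $2$, the primes dividing $D$, and the finitely many primes dividing the torsion appearing in Proposition~\ref{prop:even} and in $H^2(\Gamma,\Z)$. For $\ell\notin S$ the cup product $H^1(G_\ell,\mathbb{F}_\ell)\times H^1(G_\ell,\mathbb{F}_\ell)\to H^2(G_\ell,\mathbb{F}_\ell)\cong\mathbb{F}_\ell$ is nondegenerate, and together with $\dim H^1<\infty$ and $\dim H^2=1$ this is exactly the statement that $G_\ell$ is a Demushkin group.

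Finally I would conclude by classification. By the work of Demushkin, Serre and Labute a Demushkin group is determined up to isomorphism by its rank together with the order of the torsion subgroup of its abelianization, and the torsion-free case of even rank $2g$ is precisely the pro-$\ell$ completion of $\Gamma_{g,1}$. Since we have shown that $G_\ell$ is Demushkin of rank $2g$ with $G_\ell^{ab}\cong\Z_\ell^{2g}$, it follows that $G_\ell\cong(\hat\Gamma_{g,1})_\ell$, as required. I expect the main obstacle to be the third step: transporting nondegeneracy from the $\ell$-by-$\ell$ statement over $\Q_\ell$ (where hard Lefschetz lives) to a single $\mathbb{F}_\ell$-statement valid for all $\ell$ outside an explicitly described finite set, which requires careful bookkeeping of the integral cup product structure and of every prime at which the reduction could degenerate.
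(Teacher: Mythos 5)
Your argument for the case where the relator lies in the commutator subgroup is essentially the paper's: minimal one-relator pro-$\ell$ presentation gives $\dim H^2(G_\ell,\mathbb{F}_\ell)\le 1$, hard Lefschetz (Theorem~\ref{thm:even}) gives a nondegenerate symplectic cup product over $\Q_\ell$, an integral skew-symmetric matrix controls the finitely many bad primes, and the classification of odd-$\ell$ Demushkin groups with torsion-free abelianization finishes. Two remarks on that part. First, your mechanism for producing the integral form is shakier than the paper's: Proposition~\ref{prop:cup} compares profinite group cohomology with \'etale cohomology, not discrete group cohomology of $\Gamma$ with pro-$\ell$ cohomology of $G_\ell$; the comparison on $H^2$ is exactly the ``goodness'' issue and is not automatic. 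The paper avoids this by expanding $\bar r=\sum a_{ij}[\bar x_i,\bar x_j]$ in the graded Lie algebra $Gr(F)$ and citing Labute's computation, which identifies the pro-$\ell$ cup product with the matrix $(a_{ij})$ directly. Second, your choice of $S$ via the discriminant is fine (arguably cleaner than the paper's).

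The genuine gap is that you assume from the first sentence that $r$ lies in the commutator subgroup, and the theorem as stated makes no such hypothesis. This is not a harmless normalization: if $r\notin[F,F]$ then the $d$-generator one-relator presentation of $G_\ell$ is no longer minimal (the image of $r$ kills part of the abelianization), so you lose both the count $d=\dim H^1(G_\ell,\mathbb{F}_\ell)$ and the bound $\dim H^2(G_\ell,\mathbb{F}_\ell)\le 1$, and the whole Demushkin argument stalls. The paper spends roughly half of its proof on this case: it chooses a splitting $\phi:F\to F'=F^{d-1}$ compatible with $\Z^d\to\Z^d/(\bar r)$, sets $r'=\phi(r)\in[F',F']$, and shows that for $\ell$ outside the set $S_1$ of torsion primes of $\Z^d/(\bar r)$ the induced map $G_\ell\to H_\ell$ is an isomorphism --- this requires the pro-$\ell$ analogue of Stallings' theorem (Lemma~\ref{lemma:stallings}: iso on $H^1(\cdot,\Z/\ell)$ plus injection on $H^2(\cdot,\Z/\ell)$ implies iso), together with the explicit description $H^2(H_\ell,\Z/\ell)\cong H^1(R',\Z/\ell)^{\hat F'_\ell}\cong\Z/\ell$ to verify the $H^2$ injectivity. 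Only after this reduction does one land in the situation you treat. You would need to either supply this reduction or restrict the statement you are proving.
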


Before giving the proof, we need the following version of Stallings'
theorem \cite{stallings}.

\begin{lemma}\label{lemma:stallings}
If $f:G\to H$ is a continuous homomorphism of pro-$\ell$ groups such
that the induced map  $H^i(H,\Z/\ell\Z)\cong H^i(G,\Z/\ell\Z)$ is an
isomorphism for $i=1$ and an injection for
$i=2$, then $f$ is an isomorphism. 
\end{lemma}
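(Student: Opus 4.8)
The plan is to establish surjectivity and injectivity of $f$ separately, exploiting the dictionary between mod-$\ell$ cohomology and the Frattini quotient together with the Hochschild--Serre five-term exact sequence. Throughout I write $\mathbb{F}_\ell=\Z/\ell\Z$; since the coefficients are finite, the relevant spectral sequence for profinite cohomology is available with no finiteness hypothesis on the groups (so lemma~\ref{lemma:HS} is not needed here), and I would simply cite the standard machinery, e.g.\ \cite{rz}. For surjectivity, recall that $H^1(-,\mathbb{F}_\ell)=Hom_{cont}(-,\mathbb{F}_\ell)$ and that the image $f(G)$ is automatically closed. A continuous homomorphism of pro-$\ell$ groups is surjective precisely when the dual map on $H^1$ is injective: if $f(G)$ were a proper closed subgroup of $H$ it would lie in an open subgroup of index $\ell$, producing a nonzero character of $H$ annihilated by $f^*$. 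Since $f^*$ is an isomorphism on $H^1$, it is in particular injective, so $f$ is surjective. This is just the Burnside basis / Frattini argument.

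Next I would set $N=\ker f$, a closed normal subgroup, and consider the extension $1\to N\to G\to H\to 1$. The five-term exact sequence with trivial $\mathbb{F}_\ell$-coefficients reads
$$0\to H^1(H,\mathbb{F}_\ell)\xrightarrow{f^*} H^1(G,\mathbb{F}_\ell)\to H^1(N,\mathbb{F}_\ell)^H\to H^2(H,\mathbb{F}_\ell)\xrightarrow{f^*} H^2(G,\mathbb{F}_\ell).$$
Because $f^*$ is an isomorphism on $H^1$, the restriction map $H^1(G,\mathbb{F}_\ell)\to H^1(N,\mathbb{F}_\ell)^H$ is zero, so the transgression embeds $H^1(N,\mathbb{F}_\ell)^H$ into $H^2(H,\mathbb{F}_\ell)$. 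But $f^*$ is injective on $H^2$, so by exactness this embedded copy must be zero. Hence $H^1(N,\mathbb{F}_\ell)^H=0$.

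It then remains to deduce $N=1$. If $N\neq 1$, its Frattini quotient $V=N/\overline{N^\ell[N,N]}$ is a nonzero profinite $\mathbb{F}_\ell$-vector space carrying a continuous $H$-action by conjugation, and its continuous dual $V^\vee=Hom_{cont}(N,\mathbb{F}_\ell)=H^1(N,\mathbb{F}_\ell)$ is therefore a nonzero discrete $H$-module which is a directed union of finite $H$-submodules. Choosing any nonzero finite $H$-submodule $W$, the $H$-action on $W$ factors through a finite $\ell$-group, and a finite $\ell$-group acting on a nonzero $\mathbb{F}_\ell$-vector space has a nonzero fixed vector; thus $H^1(N,\mathbb{F}_\ell)^H\supseteq W^H\neq 0$, contradicting the previous step. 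Hence $N=1$ and $f$ is an isomorphism.

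The only delicate point, and where I expect to spend the most care, is this last step: one must justify that $H^1(N,\mathbb{F}_\ell)$ is a union of finite $H$-stable subspaces so that the $\ell$-group fixed-point lemma applies even when $N$ is nontrivial but not topologically finitely generated. This reduces to the fact that a continuous action of a profinite group on a profinite module is an inverse limit of finite $H$-modules, which dualizes to the required directed union; everything else is formal.
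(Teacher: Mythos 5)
Your proof is correct, but it takes a genuinely different route from the one in the paper. The paper handles surjectivity the same way (quoting Serre's Frattini-type criterion), but for injectivity it runs Stallings' original argument in dual form: it defines the descending $\ell$-central series $C^n$ and shows by induction, using a ladder of five-term sequences and the 5-lemma, that $f$ induces isomorphisms $G/C^n(G)\cong H/C^n(H)$ for all $n$, then concludes from $\bigcap_n C^n(G)=\{1\}$. You instead pass directly to $N=\ker f$, extract $H^1(N,\Z/\ell\Z)^H=0$ from a single five-term sequence, and kill $N$ with the fixed-point lemma for finite $\ell$-groups acting on nonzero $\Z/\ell\Z$-vector spaces (using that $H^1(N,\Z/\ell\Z)$ is a discrete $H$-module, hence a union of finite submodules). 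Your version is shorter and is essentially the standard textbook proof (cf.\ Neukirch--Schmidt--Wingberg); the paper's version has the side benefit of producing the isomorphisms on all the nilpotent quotients $G/C^nG$ along the way, which is the form of the statement that survives for discrete groups. Both arguments use the pro-$\ell$ hypothesis in an essential way at the final step --- you through the fixed-point lemma, the paper through the triviality of $\bigcap C^n$. The delicate point you flag (that $H^1(N,\Z/\ell\Z)$ is a directed union of finite $H$-stable subspaces) is indeed the only place requiring care, and your justification via continuity of the conjugation action on the Frattini quotient of $N$ is adequate.
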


\begin{proof}
The surjectivity of $f$ follows from \cite[I prop 23]{serreG}, so it
remains to  check injectivity. 
  Define the $\ell$-central series by $C^0(G)=G$, and $C^{n+1}(G)=
  [G,C^n(G)]C^n(G)^\ell$. We
  claim that $f$ induces an isomorphism $G/C^n(G)\to H/C^n(H)$. The
 injectivity of $f$ will follow from this claim because one has $\bigcap C^n(G)=\{1\}$
The  proof of  the claim is essentially identical to the  argument in
\cite{stallings} in dual form; nevertheless we give it for completeness. This proof goes by induction. The initial case $n=1$
follows from the isomorphism $H^1(H,\Z/\ell\Z)\cong
H^1(G,\Z/\ell\Z)$. For the induction step we use the commutative diagram,
$$
\xymatrix{
 1\ar[r] & C^nH/C^{n+1}H\ar[r]\ar[d]^{\gamma} & H/C^{n+1}H\ar[r]\ar[d]^{f_{n+1}} & H/C^nH\ar[r]\ar[d]^{f_n} & 1 \\ 
 1\ar[r] & C^nG/C^{n+1}G\ar[r] & G/C^{n+1}G\ar[r] & G/C^nG\ar[r] & 1
}
$$
We have to show that $f_{n+1}$ is an isomorphism assuming this for
$f_n$. It is enough to check that $\gamma$ is an isomorphism.
We have a diagram coming from Hochschild-Serre,
$$
\xymatrix{
 H^1(H/C^n)\ar[r]\ar[d]^{\alpha} & H^1(H)\ar[r]\ar[d]^{\beta} & Hom(C^nH,\Z/\ell)\ar[r]\ar[d]^{\gamma^*} & H^2(H/C^n)\ar[r]\ar[d]^{\delta} & H^2(H)\ar[d]^{\epsilon} \\ 
 H^1(G/C^n)\ar[r] & H^1(G)\ar[r] & Hom(C^nG,\Z/\ell)\ar[r] & H^2(G/C^n)\ar[r] & H^2(G)
}
$$
The hypotheses, including the induction hypothesis, implies that
$\alpha,\beta,\delta$ are isomorphisms, and $\epsilon$ is
injective. Therefore $\gamma^*$ is an isomorphism by the 5-lemma.
This implies that $\gamma$ is an isomorphism.

\end{proof}

\begin{proof}[Proof of theorem \ref{thm:1rel}]
Let $G$ be the completion of the quotient of the free group on $d$
letters $F=F^d$ by the normal
subgroup $R$ generated by single element $r\in F$ with  $r\not= 1$.

 We have two cases.  The first case is where $r\in [F,F]$.
The associated graded of $F$ with respect to the lower central series
$$Gr(F) = F/[F,F]\oplus [F,F]/[F,[F,F]]\oplus\ldots$$
is a graded Lie algebra over $\Z$ with Lie bracket induced by the
commutator \cite{lazard}. 
Let $x_1,\ldots, x_d$ denote
generators of $F$. The first summand $F/[F,F]$ is a  free $\Z$-module freely 
generated by the classes $\bar x_i$ of $x_i$, and the 
next summand $[F,F]/[F,[F,F]]$ is freely 
generated by $[\bar x_i,\bar x_j],$ with $ i<j$. Thus we can expand the class $\bar r\in
[F,F]/[F,[F,F]]$ of $r$ as $\bar r=\sum a_{ij}[\bar x_i, \bar
x_j]$ with $a_{ij}\in \Z$. We extend
$(a_{ij})$ to a skew symmetric matrix by setting $a_{ji}=-a_{ij}$ and
$a_{ii}=0$. By \cite[3.2.5]{rz}, we have an  exact sequence
$$ \hat R_\ell\to \hat F_\ell\to G_\ell\to 1$$
Thus  $\hat G_\ell$ is also a one relator group in the
topological sense, and   therefore
$\dim_{\mathbb{F}_\ell}H^2(G_\ell,\Z/\ell\Z)=1$ by  \cite[cor, p 31]{serreG}.
 We can also conclude that 
$$G_\ell/[G_\ell,G_\ell]\cong \hat F_\ell/[\hat F_\ell,\hat F_\ell]\cong 
\Z_\ell^d$$
Thus $d=\dim_{\mathbb{F}_\ell}H^1(G_\ell,\Z/\ell\Z)$ is the minimal number of
generators of $G_\ell$. 
From  theorem~\ref{thm:even}, it follows that  $d=2g$  for some integer $g$ and $H^2(G_\ell,\Q_\ell)\not=0$.
Therefore $H^2(G_\ell,\Z_\ell)\cong \Z_\ell$ and the cup product
pairing
$$H^1(G_\ell,\Z_\ell)\times H^1(G_\ell,\Z_\ell)\to H^2(G_\ell,\Z_\ell)$$
is nondegenerate.  By an argument identical to the proof of \cite[prop
3]{labute}, we see that this pairing is represented by  the matrix
$(a_{ij})$. Let
$S$ denote the union of $\{2,p\}$ and  the set of all prime factors of
the $a_{ij}$. Then 
we can reduce modulo $\ell\notin S$ to obtain a nondegenerate cup
product pairing
$$H^1(G_\ell,\Z/\ell\Z)\times H^1(G_\ell,\Z/\ell\Z)\to H^2(G_\ell,\Z/\ell\Z)$$
 It follows that $G_\ell$ is a so called  Demushkin group
\cite{demushkin, labute}. These are classified. Since $\ell$ is odd,
the only possibility is 
$$G_\ell \cong  \langle y_1,\ldots, y_{2g}\mid  y_1^{\ell^n}[y_1,y_{g+1}]\ldots
[y_g,y_{2g}]\rangle$$
for some integer $n\ge 0$.
When $n>0$, $G_\ell/[G_\ell,G_\ell]$ has torsion contrary to what was
shown above. Therefore $n=0$ and the theorem is proved in this case.

Now we turn to the remaining case where $r\notin [F,F]$. Let $\bar r\in F/[F,F]\cong \Z^d$ be the image
of $r$.  Fix an isomorphism
$$( F/[F,F])/(\bar r)/(\text{torsion})\cong \Z^{d-1}$$
and lift the generators on the right to the free group $F'=F^{d-1}$.
We thus have a commutative diagram
$$
\xymatrix{
 F\ar[r]\ar@{-->}[d]^{\phi}\ar[rd] &  \Z^d\ar[d]\ar[rd]&  \\ 
 F'\ar[r] & \Z^{d-1} & \Z^d/(\bar r)\ar[l]
}
$$
given by the solid arrows. We can choose a  homomorphism $\phi:F\to F'$
which completes the commutative diagram as indicated.
Let $K$ be the quotient of $F'$ by the  normal subgroup generated by
$r'=\phi(r)$, and let $H=  \hat K$. The homomorphism $\phi$
induces a continuous homomorphism $f:G\to H$.

Let $S_1$ be the set of primes $\ell$ such that
$\Z^d/(\bar r)$ has $\ell$-torsion. Equivalently $S_1$ is the minimal
set of primes such that $(\Z^d/(\bar r))_\ell$ is
torsion free whenever $\ell\notin S_1$. We assume that $\ell\notin
S_1$ for the remainder of this paragraph.
We claim that $f$ induces an isomorphism $G_\ell\cong H_\ell$. By construction, we have $H^1(H_\ell,
\Z/\ell\Z)\cong H^1(G_\ell,\Z/\ell\Z)$. If we can show that there is
an injection on $H^2$, the claim will follow from lemma~\ref{lemma:stallings}.
We split this into subcases. Suppose that $r'=1$, then
 $H_\ell$ is a free pro-$\ell$ group. Therefore $H^2(H_\ell)=0$ so the
claim follows in this case. But in fact, this case is impossible
because $G_\ell$ cannot be free. Thus $r'\not=1$. Then
$$H^2(H_\ell,\Z/\ell\Z)\cong H^1(R',\Z/\ell \Z)^{\hat F_\ell'}\cong
\Z/\ell\Z $$
where $R'\subset \hat F_\ell'$ is the closed normal subgroup generated by $r'$
(cf \cite[pp 30-31]{serreG}). We have a similar description for
$H^2(G_\ell)$. From this it follows easily that the map $H^2(H_\ell)\to
H^2(G_\ell)$ is nonzero, and therefore an isomorphism.

With the claim now proven, we can work with $H$ instead of $G$ provided we choose
$\ell\notin S_1$. By construction, $r'\in [F',F']$, so  we are now
in the same situation as case 1. 
The  arguments for that case show there is a finite set of primes $S_2$,
explicitly determined by $r'$, such that when $\ell\notin S_2$, we have $H_\ell\cong \hat
\Gamma_{g,1;\ell}$ for some $g$. In conclusion, the theorem holds in
the second case when $S=S_1\cup S_2$.

\end{proof}

\begin{cor} With the notation as in  theorem \ref{thm:1rel}, the maximal pro-nilpotent  prime to $S$ quotients of $G$ and $\hat \Gamma_{g,1}$ are isomorphic.
\end{cor}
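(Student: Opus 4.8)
The plan is to deduce the corollary from Theorem~\ref{thm:1rel} using the standard structure theory of pro-nilpotent groups, so that essentially no new work beyond bookkeeping is required. First I would recall that every pro-nilpotent group splits canonically as the direct product of its pro-$\ell$ Sylow subgroups: this holds for finite nilpotent groups and passes to inverse limits. Consequently, for an arbitrary profinite group $G$, the maximal pro-nilpotent prime to $S$ quotient is naturally identified with $\prod_{\ell\notin S} G_\ell$. Indeed, each $G_\ell$ is a pro-$\ell$, hence pro-nilpotent and prime to $S$ (for $\ell\notin S$), quotient of $G$; conversely the pro-$\ell$ part of the maximal pro-nilpotent quotient of $G$ coincides with $G_\ell$, since any pro-$\ell$ quotient of $G$ factors through the maximal pro-nilpotent quotient. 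The same identification applies to $\hat\Gamma_{g,1}$.

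Next I would check that the integer $g$ in Theorem~\ref{thm:1rel} can be chosen independently of $\ell$. By the theorem, $2g=\dim H^1(G,\Q_\ell)=\operatorname{rank} G_\ell/[G_\ell,G_\ell]$, and by Corollary~\ref{cor:even} this rank is a fixed even integer for every $\ell\not=p$. Hence a single value of $g$ works uniformly for all $\ell\notin S$, and $\hat\Gamma_{g,1}$ is unambiguously defined.

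With these two points in hand, I would apply Theorem~\ref{thm:1rel} prime by prime to obtain isomorphisms $G_\ell\cong (\hat\Gamma_{g,1})_\ell$ for each $\ell\notin S$, then take the product over $\ell\notin S$ and invoke the identification of the first paragraph on both sides. This yields the asserted isomorphism of maximal pro-nilpotent prime to $S$ quotients. I do not anticipate a genuine obstacle: the only subtle point is the uniformity of $g$, which is supplied by Corollary~\ref{cor:even}, while the assembly of the per-prime isomorphisms into an isomorphism of the products is automatic, since both sides are literally direct products indexed by the same set of primes.
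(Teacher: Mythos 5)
Your argument is correct and is essentially the paper's own proof: the paper simply says the corollary follows from Theorem \ref{thm:1rel} together with \cite[lemma 2.10]{lo}, which supplies exactly the identification of the maximal pro-nilpotent (prime to $S$) quotient with the product $\prod_{\ell\notin S}G_\ell$ that you prove directly from the Sylow decomposition of finite nilpotent groups. Your explicit check that $g$ is independent of $\ell$ via Corollary \ref{cor:even} is a worthwhile detail that the paper leaves implicit.
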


\begin{proof}
This follows from the theorem and \cite[lemma 2.10]{lo}.
\end{proof}

\end{document}